\documentclass{article}
\usepackage{fullpage}

\usepackage{graphicx} 

\usepackage[a4paper, total={5.5in, 8.5in}]{geometry}

\usepackage{amsmath,amsthm,amssymb}
\usepackage[T1]{fontenc} 
\usepackage[utf8]{inputenc} 
\usepackage{graphicx}
\usepackage{scrextend}
\usepackage{xcolor}
\usepackage{mathtools}

\usepackage{thm-restate} 

\usepackage{enumitem} 
\makeatletter
\newcommand{\customitem}[1]{%
\item[#1]\protected@edef\@currentlabel{#1}%
}
\makeatother

\usepackage[hidelinks]{hyperref} 
\usepackage[noabbrev,capitalise]{cleveref}

\graphicspath{figures/}

\newcommand{\N}{\mathbb{N}}

\newcommand{\R}{\mathbb{R}}
\newcommand{\pos}{\operatorname{pos}}

\newcommand{\col}{\operatorname{col}}

\newtheorem{theorem}{Theorem}
\newtheorem{lemma}[theorem]{Lemma}

\newtheorem{problem}[theorem]{Problem}

\newtheorem{proposition}{Proposition}

\newcommand{\sgn}{\operatorname{sgn}}

\newcommand{\conv}{\operatorname{conv}}

\begin{document}

\title{Blow-ups of order types of positive density}
\author{Ruy Fabila-Monroy\thanks{Departamento de Matem\'aticas, Cinvestav, MEXICO. \texttt{ruyfabila@math.cinvestav.edu.mx}} \thanks{Partially supported by SECIHTI grant  CBF-2025-I-2712.} \and
Benedikt Hahn\thanks{Graz University of Technology, Austria. \texttt{benedikt.hahn@tugraz.at}. This research was funded in part by the Austrian Science Fund (FWF) 10.55776/DOC183.}  \and
Jesús Leaños\thanks{Unidad Académica de Matemáticas, Universidad Autónoma de Zacatecas, México \texttt{jleanos@matematicas.reduaz.mx}} \footnotemark[2]}
\maketitle
\begin{abstract}
Order types are an equivalence relation
between point configurations that capture their
combinatorial and convexity properties.
Let $P$ be a $\kappa$-colored sequence of $n \ge d+1$ points in general position in $\mathbb{R}^d$.
Let $\rho$ be a $\kappa$-colored order type on $k \le d+1$ points that has positive density
on $P$; that is, for some constant $\delta >0$, there are $\delta \cdot \binom{n}{k}$ $k$-point subsequences of
$P$ that have the same order type
as $\rho$ and the same color pattern. In this paper we show that there exists a constant $c >0$ (depending only on $d, \delta$, $k$ and $\kappa$) and disjoint subsets $X_1,\dots,X_k$ of $P$, each with at least $c \cdot n$ points, such that for every choice of $k$ points  $x_i \in X_i$, $(x_1,\dots,x_k)$ has the same order type and  color pattern as $\rho$.
\end{abstract}

\section{Introduction}


A set of points in $\mathbb{R}^d$ is in \emph{general position} if no $k\le d+1$ of them lie in a common $(k-2)$-flat.
Unless stated otherwise, all point sets in this paper are in general position.
Let $P$ and $Q$ be two sets of $n$ points in the plane.
Let $[n]:=\{1,\dots,n\}$.
We say that $P$ and $Q$ have the same \emph{order type}
if there exist bijective functions (called \emph{labellings}),  $\ell_P:[n] \to P$ and $\ell_Q:[n] \to Q$,
such that for every $1 \le i < j < k \le n$ 
either both
\[(\ell_P(i),\ell_P(j), \ell_P(k)) \textrm{ and }   (\ell_Q(i),\ell_Q(j), \ell_Q(k))\]
define a left  turn, or both define a right turn.
Order types can be generalized to $\mathbb{R}^d$, where one now
considers  $(d+1)$-tuples of points. The orientation of a $(d+1)$-tuple of points
$(p_1,\dots,p_{d+1})$ in $\mathbb{R}^d$ is defined by
\[
\chi\left (p_1,\dots,p_{d+1} \right )
\coloneqq\sgn
\det\!\begin{pmatrix}
1 & p_1(1)  & \cdots & p_1(d)\\
1 & p_2(1)  & \cdots & p_2(d)\\
\vdots & \vdots & \ddots & \vdots\\
1 & p_{d+1}(1)  & \cdots & p_{d+1}(d)\\
\end{pmatrix}.
\]
Note that if $\{p_1,\dots,p_{d+1}\}$ is in general position, then $\chi(p_1,\dots,p_{d+1}) \in \{+, -\}$.

The notion of order types was introduced by Goodman and Pollack~\cite{m_sorting}. They capture the combinatorial and convexity properties
of point configurations. For example, in the plane, there are only two different non-equivalent configurations of  four points in general position:
either they form a convex quadrilateral or a triangle with a point in the interior.

This observation provides an alternative formulation of the following  well known problem
in Combinatorial Geometry.  A \emph{rectilinear drawing} of graph
$G$ is a drawing of $G$ in the plane in which its vertices are drawn as point sets in general position and its edges
are straight line segments joining the corresponding pair of points. The \emph{rectilinear crossing number} of $G$ is the minimum
number, $\overline{\operatorname{cr}}(G)$ of pairs of edges that cross in a rectilinear drawing of $G$. For complete graphs
we have that four vertices in a rectilinear drawing define a crossing if and only if they define a $4$-tuple in convex position.
Thus, $\overline{\operatorname{cr}}(K_n)$ is equal to the minimum possible number of convex $4$-tuples in any set of $n$ points
in general position. 
Since every 4-tuple of points is either in convex or non-convex position, we may instead \emph{maximize} the number of non-convex $4$-tuples.

Aichholzer, Duque, Fabila-Monroy, García-Quintero and Hidalgo-Toscano~\cite{cr_proy} used various computer heuristics to find rectilinear
drawings of $K_n$ with very few crossings for 
many small values of $n$.
Their drawing of $K_{2643}$ is used as a seed for the best
known rectilinear drawings of $K_n$ for arbitrarily large values of $n$. The drawings found in \cite{cr_proy}
have the following property. Each of these drawings has four subsets $V_1,V_2,V_3$ and $V_4$ of vertices,
each of about $n/4$ points such that whenever one picks points $x_1 \in V_1$, $x_2 \in V_2$, $x_3 \in V_3$ and $x_4 \in V_4$, they make up a non-convex $4$-tuple $(x_1,x_2,x_3,x_4)$.

Inspired by this empirical observation, in this paper we prove a structural result on point configurations with many
occurrences of a given order type on $k$ points (Theorem~\ref{thm:main_label}). The full strength of our result uses labelled order types and colored points. From that result we  have the following implication that explains this empirical observation.
\begin{proposition}\label{prop:main_nolabel}
 Let $P$ and $\rho$ be sets of $n$ and $k$ points in general position
in $\mathbb{R}^d$, respectively. Suppose for some constant $\delta>0$ there exist $\delta \cdot \binom{n}{k}$ subsets
of $k$ elements of $P$ with the same order type as $\rho$.
Then there exists a constant $\tilde{c}=\tilde{c}(d,\delta,k)>0$ and disjoint subsets
 $X_1,\dots,X_k$ of $P$, each with at least $\tilde{c} \cdot n$ points such that for every choice of $x_1 \in X_1,\dots, x_k \in X_k$, the point set
 $\{x_1,\dots,x_k\}$ has the same order type as $\rho$.
\end{proposition}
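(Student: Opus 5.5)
The plan is to reduce the unlabelled, uncoloured statement to the labelled (and trivially coloured) setting of Theorem~\ref{thm:main_label}, losing only a constant factor depending on $k$. First, I turn $P$ into a sequence. I fix an arbitrary ordering of $P$, for instance by the first coordinate, breaking ties arbitrarily. I give every point the same colour, so $\kappa=1$. Every $k$-subset $S\subseteq P$ then becomes a $k$-point subsequence $(s_1,\dots,s_k)$ listed in this order. This determines a labelled order type, namely the vector of orientations $\chi(s_{i_1},\dots,s_{i_{d+1}})$ over all $i_1<\dots<i_{d+1}$.

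If $S$ has the same order type as $\rho$, then its labelled order type is $\rho$ under some labelling $\ell:[k]\to\rho$. There are at most $k!$ such labellings, so at most $k!$ distinct labelled order types $\sigma$ arise from subsets with the order type of $\rho$. By pigeonhole, some fixed labelled order type $\sigma$ is realised by at least $\frac{\delta}{k!}\binom{n}{k}$ of these $k$-point subsequences. Hence $\sigma$ has positive density $\delta'=\delta/k!$ on the sequence $P$.

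Next I apply Theorem~\ref{thm:main_label} to the sequence $P$, the labelled order type $\sigma$ and the density $\delta'$. This yields disjoint $X_1,\dots,X_k\subseteq P$, each with at least $c(d,\delta',k,1)\cdot n$ points. For every choice $x_i\in X_i$, the tuple $(x_1,\dots,x_k)$ has labelled order type $\sigma$. Forgetting the labels, the set $\{x_1,\dots,x_k\}$ then has the same order type as $\rho$, through the labelling $i\mapsto x_i$ composed with $\ell$. So the proposition holds with $\tilde c(d,\delta,k)=c(d,\delta/k!,k,1)$.

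One subtlety needs care. The tuple $(x_1,\dots,x_k)$ need not appear in $P$ in increasing order. The conclusion of Theorem~\ref{thm:main_label}, however, concerns the orientation of the tuple in the index order $1,\dots,k$, not in sequence order. So this only needs to be checked against the precise wording of the theorem. If it does concern sequence order, I would additionally refine the sets so that they are ordered, i.e.\ all of $X_i$ precedes all of $X_{i+1}$. This is achieved by splitting $P$ into consecutive blocks and averaging.

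The main obstacle I anticipate is the range of $k$. The abstract states Theorem~\ref{thm:main_label} only for $k\le d+1$, while the proposition allows any $k$. For $k>d+1$ I would try a second route. The property ``$(x_1,\dots,x_k)$ has labelled type $\sigma$'' is a conjunction of $\binom{k}{d+1}$ sign conditions on determinants. Each of these is a polynomial of degree at most $d+1$ in the coordinates. So the tuples of type $\sigma$ form a $k$-uniform semi-algebraic hypergraph on $P$ of description complexity bounded in terms of $d$ and $k$, with density at least $\delta'$. The positive-fraction (Fox--Gromov--Lafforgue--Naor--Pach / Fox--Pach--Suk type) theorem for semi-algebraic hypergraphs then gives disjoint parts of linear size spanning a complete $k$-partite subhypergraph, with a constant depending only on $d$, $k$ and $\delta'$. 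Alternatively, I would iterate Theorem~\ref{thm:main_label} over all $(d+1)$-subsets of indices, shrinking the parts at each step by a constant factor while preserving the density of $\sigma$ among transversals. Controlling this density under restriction is the step I expect to require the most work.
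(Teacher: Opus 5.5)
Your argument is correct and is exactly the paper's one-line deduction: order $P$ arbitrarily, use that a $k$-set admits at most $k!$ labellings to get a labelled type $\sigma$ of density at least $\delta/k!$, and apply Theorem~\ref{thm:main_label} with $\kappa=1$, giving $\tilde c(d,\delta,k)\ge c(d,\delta/k!,k,1)$. Your two worries are unnecessary: Theorem~\ref{thm:main_label} as stated in the body puts no restriction on $k$ (the ``$k\le d+1$'' in the abstract is a slip, and Lemma~\ref{lem:synced-sets} in fact assumes $k\ge d+1$), and a copy is defined via the tuple $(x_1,\dots,x_k)$ as listed, with ``with respect to $P$'' only adding that every transversal is also a subsequence of $P$.
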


We point out the importance that geometry plays in our result. Consider the following combinatorial reformulation of the problem.
Let $H$ be a $k$-uniform hypergraph with vertex set equal to $P$, and where a subset of $k$ vertices of $H$ is
an edge if and only if the corresponding points have the same order type as $\rho$. $H$ is dense, and the $X_i$ in Proposition~\ref{prop:main_nolabel}
define a complete $k$-partite hypergraph. Now consider the random $k$-uniform hypergraph $J$ with vertices
on $P$ that results from adding every edge with probability $\delta$. The expected number of edges of $J$ is equal
to $\delta \cdot \binom{n}{k}.$ The probability
that some subsets of vertices $X_1,\dots,X_k$, each of size $t$, define a complete $k$-partite hypergraph is equal to
$\delta^{t^k}$. Therefore, the probability that there exists such a choice of subsets is at most
\[\binom{n}{t}^k \cdot \delta^{t^k} \le n^{tk}\cdot \delta^{t^k};\]
which tends to zero for $t\ge k\log n$.

We now give the definitions needed to state Theorem~\ref{thm:main_label}.
In the remaining paper, we regard any collection of points $P$ in $\mathbb{R}^d$ as a finite sequence (instead of a set) and denote the $i$th entry of a sequence by $P(i)$.
In this context two sequences $P$ and $Q$ of $n$ points in $\mathbb{R}^d$ have the same \emph{labelled order type}
if for every tuple $1\le i_1 < \cdots < i_{d+1} \le n$, 
\[\chi(P(i_1),\dots,P(i_{d+1}))=\chi(Q(i_1),\dots,Q(i_{d+1})).\]
We denote this with
$P \simeq Q$. Note that when $P$ and $Q$ have different labelled order types, they might still have the same (unlabelled) order type,
when regarded as sets.

We assume that the points in any sequence $P$ are $\kappa$-colored by a function $\col : P \to [\kappa]$. 
Here, $\kappa$ is an integer that depends on the context.
If no coloring is mentioned, we assume $\kappa=1$.
We say that two sequences $\rho,\tau$ of $k$ points
are \emph{copies (of one another)}, if $\tau \simeq \rho$  and
$(\col(\tau(1)),\dots,\col(\tau(k)))=(\col(\rho(1)),\dots,\col(\rho(k)))$.
In an abuse of notation, we allow $\col$ to assign different colors to the same point $x \in \R^d$, depending on the sequence that $x$ was chosen from.
When a subsequence of $P$ is a copy of $\rho$, we write \[\rho \preccurlyeq P.\]

We say that $\rho$ has \emph{positive density} or that it has \emph{density $\delta$} on $P$ if there exists a constant
$\delta >0$ such that at least $\delta \binom{n}{k}$ of the $k$-element subsequences of $P$ are copies of $\rho$.\footnote{In general, we say that $Y\subset X$ has \emph{positive density} in $X$, if there
exists a constant $\delta >0$ such that $|Y| \ge \delta \cdot |X|$.}
Let $X_1,\dots,X_k$ be disjoint subsequences of $P$.
We say that
$(X_1,\dots,X_k)$ has \emph{$\rho$-type transversals} if every one of its transversals $(x_1, \ldots, x_k)$ with $x_i \in X_i$
is a copy of $\rho$.
If in addition they are also subsequences of $P$, then we say that
$(X_1,\dots,X_k)$ has \emph{$\rho$-type transversals with respect to}  $P$.
If $\rho$ is not specified, then we say that $(X_1,\dots,X_k)$ has \emph{same-type transversals}.

We are ready to state our main result.
\footnote{After finishing this preprint, we learned that \cref{thm:main_label}
follows directly from previous work of Fox, Pach and Suk~\cite[Corollary~1.2]{semigraphs}.
The underlying regularity method originates with Fox, Gromov, Lafforgue, Naor and
Pach~\cite{gromov}. Our proof is of a different, more geometric nature, relying only on
the second selection lemma and the signed central projection of \cref{sec:sgn_proj}.}
\begin{restatable}{theorem}{mainlabel}\label{thm:main_label}
Let $P$ and $\rho$ be two sequences of $n$ and $k$ points in general position
in $\mathbb{R}^d$, and let $\col$ be a $\kappa$-coloring of both sequences. 
Suppose that  $\rho$ has density $\delta>0$ in $P$.
Then, there exists a constant $c=c(d,\delta,k, \kappa)>0$ and disjoint subsequences
 $X_1,\dots,X_k$ of $P$, each with at least $c \cdot n$ points such that $(X_1,\dots,X_k)$ has
 $\rho$-type transversals with respect to $P$.
\end{restatable}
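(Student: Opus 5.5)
We will prove the statement by induction on $k$, peeling off one point at a time. The only geometric input will be the second selection lemma, together with a ``signed central projection'' that reduces dimension. Colors and positions in the sequence are combinatorial. So first we apply a standard averaging reduction: split $P$ into $k$ consecutive blocks $B_1,\dots,B_k$ of length $n/k$, keeping only copies whose $i$th point lies in $B_i$ and has color $\col(\rho(i))$. A random partition into consecutive blocks (or a pigeonhole over $O_k(1)$ block patterns, obtained by partitioning into $M$ blocks and choosing $k$ of them) retains a $\delta'=\delta'(\delta,k,\kappa)$ fraction of the copies. After this, every transversal of sets $X_i\subseteq B_i$ automatically has the right index order and color pattern. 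It remains to control the orientations, i.e.\ the signs $\chi$ on the $(d+1)$-subtuples. Since $k\le d+1$, there are at most one such subtuple when $k=d+1$, and none when $k\le d$. In the latter case the labelled order type is vacuous, and the $B_i$ restricted to the correct colors already work (with density argument giving linear size).

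So the heart is $k=d+1$. Here we must find $X_1,\dots,X_{d+1}$ of linear size with $\chi(x_1,\dots,x_{d+1})=\chi(\rho)$ for all transversals. We proceed by induction on $d$.

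\textbf{Base case $d=1$.} This case is trivial: the sign is the order on the line. Having $\delta' n^2$ pairs $x_1<x_2$ in $B_1\times B_2$ lets us split at a median-type threshold and obtain linear sets with $X_1$ entirely left of $X_2$.

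\textbf{Inductive step.} Among the $\delta' n^{d+1}$ good $(d+1)$-tuples, their simplices are $d$-simplices on $P$. By the second selection lemma, some point $o$ lies in $\Omega(n^{d+1})$ of them. Better, we apply the colored/partite version: we find a point $o$ and a positive fraction of good tuples (one from each $B_i$) whose simplex contains $o$. We then centrally project from $o$ onto a sphere or hyperplane. We expect this is exactly the ``signed central projection'': $x\mapsto \pm(x-o)$, with the sign chosen so that the orientation $\chi(x_1,\dots,x_{d+1})$ becomes determined by orientations of $d$-tuples in the projected configuration in $\R^{d-1}$ (projective plane) together with the sign bookkeeping. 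Concretely, $\chi(x_1,\dots,x_{d+1})$ can be expressed via the signs $\det(x_{j}-o)_{j\ne i}$. Fixing $x_{d+1}$'s role, we aim to pigeonhole over a bounded amount of sign data, so that the orientation is forced by a $d$-point orientation in dimension $d-1$ on the projected points, augmented with a color recording the projection sign. This increases $\kappa$ by a factor of $2$.

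We then apply induction to the $d$-point sequence obtained from $B_1,\dots,B_d$, projected from a suitably chosen $x_{d+1}$-region. More carefully, we first use the selection lemma to fix a cell, and then pass to a linear subset of $B_{d+1}$ on one side of all relevant hyperplanes. This last step can be done by a cutting argument, or by a second application of the same reasoning with roles swapped. This yields linear $X_1,\dots,X_d$ whose projected transversals have constant orientation. Finally, we choose $X_{d+1}$ as a linear subset of $B_{d+1}$ lying in a fixed cone region where the combined sign is constant.

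\textbf{Main obstacle.} The main obstacle is the last step: making the sign of $\chi$ independent of all $d+1$ choices simultaneously, rather than merely on average. We must ensure that after extracting $X_1,\dots,X_d$ by induction, a linear portion of $B_{d+1}$ still forms good tuples with all of them. Our first attempt will use the fact that orientation is a polynomial sign condition of bounded complexity. Then a Fox–Pach–Suk-type hypergraph regularity for semialgebraic relations (only as a sanity check, not as the proof) indicates the correct quantitative shape. Within our geometric proof, we handle this by iterating the selection lemma to localize $x_{d+1}$ near a point deep inside, so that the relevant hyperplanes spanned by $X_1,\dots,X_d$ all avoid a fixed small region containing linearly many points of $B_{d+1}$.
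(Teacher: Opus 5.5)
\textbf{There is a genuine gap.} Your proof relies on $k\le d+1$, and that assumption is not part of the theorem. The statement places no bound on $k$, and the paper's proof works with arbitrary $k\ge d+1$ (see the hypothesis $n\ge k\ge d+1$ in \cref{lem:synced-sets}). The applications in the introduction (convex $k$-gons in the plane for $k\ge 4$, non-convex $4$-tuples, depth Carath\'eodory with $d+2$ points, the same-type lemma for arbitrary $k$) all need $k>d+1$. For $k>d+1$ the real task is to make all $\binom{k}{d+1}$ orientations constant on the \emph{same} index sets, and your plan has no mechanism for this. Handling the $(d+1)$-subsets one at a time fails: after shrinking the sets to fix one orientation, nothing guarantees that the shrunken sets still carry a dense family of copies of $\rho$, which you need before treating the next subset. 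The paper solves this with \cref{thm:pin}. A positive fraction of the copies have \emph{all} their $(d+1)$-point subsimplices pinned by fixed points $q_t$. One then projects from every pinning point simultaneously, obtaining $r\binom{k}{d+1}$ sequences in $\R^{d-1}$ on one common index set, with a refined coloring recording the signs; this is why \cref{lem:synced-sets} carries the parameter $r$. Your initial reduction to consecutive blocks and color classes is correct (it even makes \cref{lem:transversals} unnecessary), but it only handles positions and colors. Combined with that reduction, the Fox--Pach--Suk density theorem you set aside would give the full theorem outright, as the paper's footnote notes. The geometric route you sketch instead does not.

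Even for $k=d+1$ the inductive step does not close. Projecting $(x_1,\dots,x_d)$ from a single centre $o$ and inducting on $d$ points in $\R^{d-1}$ only fixes $\chi(o,x_1,\dots,x_d)$, i.e.\ the side of one facet hyperplane on which $o$ lies. You then need every $x_{d+1}\in X_{d+1}$ to lie on the same side as $o$ of every hyperplane spanned by a transversal of $(X_1,\dots,X_d)$. You give no argument that these $\Theta(n^d)$ hyperplanes avoid a small region containing linearly many points of $B_{d+1}$, and arranging that is essentially the original problem again. This is exactly the obstacle you name, and it stays open. The missing idea is \cref{thm:proj}. Choose $o$ inside the simplices via the second selection lemma and project \emph{all} $d+1$ points. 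If a projected tuple has the same labelled order type and sign pattern as a reference tuple $Q$ whose simplex contains $o$, then $\chi(o,Q_i)=\chi(o,Q_i')$ for every facet $Q_i$. This forces $o\in\conv(Q')$ and $Q'\simeq Q$, with no distinguished last point. But it turns a $(d+1)$-point problem in $\R^d$ into a $(d+1)$-point problem in $\R^{d-1}$, so an induction on $d$ cannot stay within $k\le d+1$. Already for $d=2$, $k=3$ you need the line case with three points, which the paper settles by placing pinning points between every pair; your two-point base case is not enough.
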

Note that since there are at most $k!$ ways to label a given set of $k$ points,
Theorem~\ref{thm:main_label} readily implies Proposition~\ref{prop:main_nolabel} and so, $\tilde{c}(d,\delta,k) \geq c(d,\delta/k!,k,1)$.

\subsection{Related work}

In the literature there are various results about Euclidean point sets  similar to Proposition~\ref{prop:main_nolabel}; in the sense that
they guarantee the existence of special substructures of linear size of the original point set.
As we will see, many of these results can be derived from \cref{thm:main_label}.
One especially closely related result is the same-type lemma by Bárány and Valtr, which guarantees large subsets of any given sets of points with same-type transversals.
\begin{theorem}[Same-type lemma~\cite{positive_es}]
Let $X_1,\dots,X_k$ be disjoint subsets of points in $\mathbb{R}^d$ such that $X_1 \cup \cdots \cup X_k$
is in general position. Then there exists a positive constant $c_{ST}=c_{ST}(d,k)$ and subsets $Y_1 \subset X_1,\dots, Y_k \subset X_k$, such
that $(Y_1,\dots,Y_k)$ has same-type transversals and $|Y_i| \ge c_{ST} |X_i|$ for every $1 \le i \le k$.
\end{theorem}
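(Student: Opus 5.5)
The plan is to reduce to $k=d+1$ sets, translate ``same-type transversals'' into a separation condition on convex hulls, and then obtain that separation by repeated geometric partitioning.

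\textbf{Reduction to $k=d+1$.} The order type of a transversal $(y_1,\dots,y_k)$ is determined by the orientations $\chi(y_{i_1},\dots,y_{i_{d+1}})$ over all index tuples $i_1<\cdots<i_{d+1}$. So it suffices to prove the statement for $k=d+1$ (for $k\le d$ there is nothing to prove, as every $k$-tuple in general position has the same order type). Suppose the case $k=d+1$ holds with constant $c_0(d)$. Fix an enumeration of the $\binom{k}{d+1}$ index sets $I\subset[k]$ with $|I|=d+1$. For each $I$ in turn, apply the $(d+1)$-case to the current sets $(Y_i)_{i\in I}$, and leave the sets $Y_j$ with $j\notin I$ unchanged. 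Passing to subsets preserves the property that all transversals have constant orientation on the earlier index sets. Hence at the end every $(d+1)$-subfamily has constant orientation, and
\[
c_{ST}(d,k)\ge c_0(d)^{\binom{k}{d+1}}.
\]

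\textbf{A geometric criterion.} For $d+1$ finite sets $Y_1,\dots,Y_{d+1}$ with general-position union, I will show the following: if no hyperplane meets all of $\conv Y_1,\dots,\conv Y_{d+1}$, then all transversals have the same orientation. To see this, take two transversals and move one point at a time along a segment inside the corresponding $\conv Y_i$. The determinant defining $\chi$ is continuous along this path, so the orientation can change only if at some moment the $d+1$ points are affinely dependent. At that moment they lie on a common hyperplane meeting every hull, which the hypothesis excludes. The goal is therefore to find subsets $Y_i\subset X_i$ with $|Y_i|\ge c_0|X_i|$ whose hulls admit no common transversal hyperplane.

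\textbf{Finding separated subsets (main obstacle).} This step carries the real difficulty. I would proceed by induction on $d$, using the ham-sandwich theorem.
\begin{enumerate}
\item First separate $X_{d+1}$ from the other sets. Take a hyperplane $h$ bisecting $X_{d+1}$. Replace $X_{d+1}$ by one of its halves, and replace each $X_i$ with $i\le d$ by its larger half-fraction on the opposite side when possible.
\item Iterate this step, using ham-sandwich cuts of $d$ of the sets at a time, to reach a configuration where the convex hulls are pairwise separated by hyperplanes and in ``general convex position''.
\item Then project centrally from a small region near $\conv Y_{d+1}$ to reduce the question to $d$ sets in $\mathbb{R}^{d-1}$. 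A hyperplane meeting all $d+1$ hulls projects to a hyperplane of $\mathbb{R}^{d-1}$ meeting the projected $d$ hulls, so the $(d-1)$-dimensional statement applies there.
\end{enumerate}

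The delicate point is controlling the projection. Points of $\conv Y_{d+1}$ are not a single centre, so I would first shrink $Y_{d+1}$, by a constant number of further ham-sandwich bisections, until it lies in a region small enough that all centres inside it induce the same combinatorial projection of the other sets. Each bisection costs a factor of $2$, and the number of bisections is bounded in terms of $d$ only. This gives $c_0(d)>0$.

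If this projection step resists, my fallback is Matoušek's simplicial partition theorem. I would partition each $X_i$ into $r$ cells of equal size such that every hyperplane crosses only $O(r^{1-1/d})$ cells of each family. Then I would count the $(d+1)$-tuples of cells, one cell from each family, that admit a common transversal hyperplane, and aim to show for a large constant $r$ that this is a vanishing fraction of all $r^{d+1}$ tuples. Any tuple that is not bad yields the desired $Y_i$ with $|Y_i|\ge |X_i|/r$.
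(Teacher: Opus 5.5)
The reduction to $k=d+1$ and your hyperplane-transversal criterion are both correct. The gap is in finding the separated subsets, and it sits exactly where you suspect.

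\textbf{Step 3 does not work.} Central projection from a point $o$ sends a hyperplane to a hyperplane of $\mathbb{R}^{d-1}$ only if it passes through $o$. A hyperplane missing $o$ projects onto the whole image hyperplane minus a $(d-2)$-flat. So the $(d-1)$-dimensional hypothesis, applied to the projected $Y_1,\dots,Y_d$, only rules out common transversal hyperplanes through the single centre $o$. It says nothing about those meeting $\conv Y_{d+1}$ elsewhere.

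\textbf{The proposed repair does not supply the condition it needs.} Requiring every centre in $\conv Y_{d+1}$ to induce the same combinatorial projection means $\conv Y_{d+1}$ must avoid every hyperplane spanned by a transversal of $(Y_1,\dots,Y_d)$. Equivalently, $\chi(y_1,\dots,y_{d+1})$ must not depend on $y_{d+1}$. That is itself a same-type statement about the sets you are constructing. A bounded number of cardinality bisections of $Y_{d+1}$ gives no control over which of these up to $\prod_{i\le d}|Y_i|$ hyperplanes cut through $\conv Y_{d+1}$.

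\textbf{Step 2 does not close the gap either.} Pairwise separation is insufficient: three small triangles centred on a common line in $\mathbb{R}^2$ are pairwise separated yet have a common line transversal. Also, ``general convex position'' is never defined.

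\textbf{The missing idea, which makes projection unnecessary.} Use the Radon-type criterion: compact convex sets $C_1,\dots,C_{d+1}\subset\mathbb{R}^d$ have a common hyperplane transversal if and only if, for some partition $[d+1]=I\sqcup J$ into nonempty parts, $\conv\bigcup_{i\in I}C_i$ and $\conv\bigcup_{j\in J}C_j$ intersect. One direction: split an affine dependence among points $c_i\in C_i\cap h$ by the signs of its coefficients. The other: a common point of the two hulls yields an affine dependence among suitable $c_i\in C_i$.

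Your Step 1 is the case $I=\{d+1\}$. Do the same for all $2^d-1$ partitions, taking $d+1\in I$. Each time, cut with a ham-sandwich hyperplane bisecting $X_1,\dots,X_d$ (not $X_{d+1}$). Keep the majority side of $X_{d+1}$ for the indices in $I$ and the opposite side for those in $J$. Since subsets inherit these separations, you get $c_0(d)\approx 2^{-(2^d-1)}$ with no projection at all.

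\textbf{The fallback is viable but its decisive count is only announced.} A naive union bound over the $\Theta(r^d)$ hyperplanes spanned by cell vertices, each meeting $O(r^{(d+1)(1-1/d)})$ tuples, gives $O(r^{2d-1/d})$ bad tuples. For $d\ge 2$ this exceeds $r^{d+1}$. You must instead charge each bad tuple to an extremal transversal through $d$ vertices of its own cells. That yields $O(r^{d+1-1/d})$ bad tuples, a vanishing fraction.

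For comparison, the paper avoids all this geometry. It blows each point of $X_i$ up into a tiny cluster of $\prod_{j\neq i}|X_j|$ points of colour $i$. It then finds by pigeonhole a multicoloured order type of positive density and applies its main theorem. A completed version of your route would be self-contained and give explicit bounds.
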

While the upper bound on $c_{ST}(d,k)$ was originally double exponential, it has subsequently been improved to an exponential \cite{fox_expon_same_type} and then a polynomial \cite{bukh_poly_same_type}.
We can derive the same-type lemma from Theorem~\ref{thm:main_label} with the following trick.

\begin{proof}[Proof from \cref{thm:main_label}]
Replace every point $x_i \in X_i$ with a cluster of $\prod_{j\neq i}|X_j|$ points very close to $x_i$ and assign
color $i$ to these points. Note that  the newly defined sets all have size equal to $\prod_{j=1}^k|X_j|$ .
Afterwards, label the points in any order. Let $P$ be the resulting sequence of $k$-colored points.
Note that $n:=|P|=k\cdot \prod_{j=1}^k|X_j|$.
The number of different labelled order types on $k$ points is a constant that depends only on $k$ and $d$.
Since each color class has $n/k$ elements, a constant fraction of all $k$-element subsequences of $P$ have all $k$ colors.
Therefore, there is an $k$-colored
sequence $\rho$ of $k$ points, with all its points of different color, with positive density $\delta$ in $P$.
Without loss of generality assume that for every $1 \le i \le k$, $\rho(i)$ has color $i$.
Apply Theorem~\ref{thm:main_label} to $P$  to obtain $X_1',\dots,X_k'$ disjoint subsets each with $c \cdot n$ points, for some
constant $c=c(d,\delta,k,k)>0$, such
that $(X_1',\dots,X_k')$ has $\rho$-type transversals with respect to $P$.
Note that every element of $X_i'$ has color $\col(\rho(i))=i$; therefore every point in $X_i'$ belongs
to the cluster of a point in $X_i$.
For every $1 \le i \le k$, let $Y_i \subset X_i$
be the set of elements $x_i \in X_i$ such that at least one of the points in the cluster of $x_i$ is in $X_i'$.
We have that
\[|Y_i|\ge \frac{|X_i'|}{\prod_{j\neq i}|X_j|}\ge \frac{c \cdot k \cdot \prod_{j=1}^k|X_j|}{\prod_{j\neq i}|X_j|} =c \cdot k \cdot |X_i|.\]
Moreover, if the points in the cluster of $x_i$ are placed sufficiently close to $x_i$, then every transversal to $(Y_1,\dots,Y_k)$ has the same order type as $\rho$.
\end{proof}

Bárány and Valtr used their result to show the following ``positive fraction'' version of the Erd\H{o}s--Szekeres Theorem.
\begin{theorem}
For every integer $k\ge 3$ there exists a constant $c_{ES}=c_{ES}(k)>0$ with the following property.
Every sufficiently large set of $n$ points $P$ in general position in the plane contains $k$ subsets
$Y_1,\dots,Y_k$ with $|Y_i| \ge c_{ES} \cdot n$ (for every $1 \le i \le k$) such that every
transversal to $(Y_1,\dots,Y_k)$ is in convex position.
\end{theorem}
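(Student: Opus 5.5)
The plan is to derive the statement from \cref{thm:main_label} with $d=2$ and $\kappa=1$, using the classical Erd\H{o}s--Szekeres theorem to supply a positive density of convex $k$-tuples. Let $m=m(k)$ be the Erd\H{o}s--Szekeres number, so that every set of $m$ points in general position in the plane contains $k$ points in convex position. Assume $n\ge m$, which is what ``sufficiently large'' will mean.

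\textbf{Step 1 (density by double counting).} Every $m$-subset of $P$ contains at least one convex $k$-subset. A fixed $k$-subset lies in exactly $\binom{n-k}{m-k}$ of the $m$-subsets. Hence the number of convex $k$-subsets of $P$ is at least
\[
\binom{n}{m}\Big/\binom{n-k}{m-k}=\binom{n}{k}\Big/\binom{m}{k}.
\]
So the convex $k$-subsets have density at least $\delta_0:=1/\binom{m}{k}$, a constant depending only on $k$.

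\textbf{Step 2 (pass to a labelled order type).} Label $P$ in an arbitrary order, so that it becomes a sequence. Each $k$-subset then corresponds to exactly one $k$-element subsequence. Every convex $k$-point set in general position has the same (unlabelled) order type, namely that of a convex $k$-gon. Its labellings therefore produce at most $k!$ labelled order types. By pigeonhole, some sequence $\rho$ of $k$ points in convex position is copied by at least $\frac{\delta_0}{k!}\binom{n}{k}$ subsequences of $P$. Thus $\rho$ has density $\delta:=\delta_0/k!$ in $P$.

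\textbf{Step 3 (apply the main theorem).} \cref{thm:main_label} now gives disjoint $Y_1,\dots,Y_k\subseteq P$ with $|Y_i|\ge c\,n$, where $c=c(2,\delta,k,1)$, such that every transversal $(y_1,\dots,y_k)$ satisfies $(y_1,\dots,y_k)\simeq\rho$. It remains to check that such a transversal is in convex position. This holds because convex position is determined by the orientations of triples. A point $y$ lies inside the triangle $y_ay_by_c$ if and only if the orientations $\chi(y_a,y_b,y)$, $\chi(y_b,y_c,y)$ and $\chi(y_c,y_a,y)$ all agree with $\chi(y_a,y_b,y_c)$. A planar set in general position is in convex position if and only if no point lies inside a triangle spanned by three others. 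Since the transversal and $\rho$ have identical triple orientations and $\rho$ is convex, the transversal is convex.

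We may therefore take $c_{ES}(k)=c\bigl(2,1/(k!\binom{m(k)}{k}),k,1\bigr)$. I expect no step to be a serious obstacle: the only nontrivial points are the double-counting bound in Step 1 and the observation in Step 3 that convexity is an order-type invariant.
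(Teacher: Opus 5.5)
Your proposal is correct and follows essentially the paper's argument. The paper likewise uses the Erd\H{o}s--Szekeres theorem and the double count $\binom{n}{n(k)}/\binom{n-k}{n(k)-k}=\binom{n}{k}/\binom{n(k)}{k}$ to get a positive density of convex $k$-tuples, and then invokes \cref{thm:main_label}, with the $k!$ labelling pigeonhole as in \cref{prop:main_nolabel}; your Steps 2 and 3 just make explicit what the paper leaves implicit, namely the choice of a labelled $\rho$ and the fact that convex position is determined by triple orientations.
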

\qed\\
The Erd\H{o}s--Szekeres Theorem~\cite{happyend} states that for every integer $k>0$, there exists a sufficiently large integer $n(k)$ such that
every set of at least $n \ge n(k)$ points in the plane contains $k$ points in convex position. Note that
such a set contains at least
\[\left. \binom{n}{n(k)}\, \middle/ \,\binom{n-k}{n(k)-k} \right.= \frac{1}{\binom{n(k)}{k}} \cdot \binom{n}{k}\]
$k$-tuples in convex position.
Thus, the positive fraction  Erd\H{o}s--Szekeres Theorem directly follows from
Theorem~\ref{thm:main_label}.

We mention a recent result by Fabila and Huemer, similar in spirit to Proposition~\ref{prop:main_nolabel}; in our setting it can be stated as follows.
\begin{theorem}[Depth Caratheódory Theorem~\cite{car_depth}]\label{thm:depth_car}
 Let $P$ be a set of $n$ points in general position in $\mathbb{R}^d$, and let $q$ be a point
 in the interior of at least $\delta \cdot \binom{n}{d+1}$ simplices with vertices on $P$, for some constant $\delta >0$.
 Then, there exist a constant
 $c_{DC}=c_{DC}(d,\delta)>0$ and subsets $X_1,\dots,X_{d+1}$ of $P$, each of at least $c_{DC}\cdot n$ points, such that every transversal to $(X_1,\dots,X_{d+1})$
 defines a simplex that contains $q$ in its interior.
\end{theorem}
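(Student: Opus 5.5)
The plan is to derive Theorem~\ref{thm:depth_car} from Theorem~\ref{thm:main_label}, using the same trick as in the derivation of the same-type lemma: we absorb the special point $q$ into the configuration so that ``$q$ lies in the interior of the simplex'' becomes an order-type condition. The basic observation is as follows. For points $p_1,\dots,p_{d+1}$ in general position together with $q$, the point $q$ lies in the interior of $\conv\{p_1,\dots,p_{d+1}\}$ if and only if, for every $i$, replacing $p_i$ by $q$ leaves the orientation unchanged:
\[
\chi(p_1,\dots,p_{i-1},q,p_{i+1},\dots,p_{d+1})=\chi(p_1,\dots,p_{d+1}).
\]
This is the standard barycentric-coordinate sign criterion. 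Hence containment of $q$ is determined by the labelled order type of the $(d+2)$-point sequence $(p_1,\dots,p_{d+1},q)$. We may assume $q$ is in general position with respect to $P$, since $q$ lies in the interior of the relevant simplices; if it is not in general position, we perturb it slightly, which does not destroy the interior containments.

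The points $q$ and $P$ cannot simply be fed into Theorem~\ref{thm:main_label} with $k=d+2$, because $q$ is a single point and contributes no density. Instead, we replace $q$ by a cluster $Q$ of $n$ points very close to $q$, all colored $2$, and color the points of $P$ with color $1$. We form the sequence $P'$ by listing $P$ and then $Q$. The cluster is chosen tiny enough that every orientation involving one point of $Q$ and $d+1$ points of $P$ equals the corresponding orientation with $q$ in its place. We also take $Q$ in general position together with $P$.

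Next we count. Every $(d+1)$-subset of $P$ whose simplex contains $q$, together with any point of $Q$, gives a $(d+2)$-subsequence with colour pattern $(1,\dots,1,2)$ whose labelled order type lies in the set $\mathcal{T}$ of ``containing'' types. There are at least $\delta\binom{n}{d+1}\cdot n \ge \delta' \binom{2n}{d+2}$ such subsequences, with $\delta'$ depending only on $d$ and $\delta$. The number of labelled order types on $d+2$ points in $\mathbb{R}^d$ is bounded by a function of $d$, so by pigeonhole a single coloured labelled order type $\rho\in\mathcal{T}$ has density at least $\delta'/N(d)$ in $P'$. Theorem~\ref{thm:main_label} with $k=d+2$ and $\kappa=2$ then yields disjoint $X_1,\dots,X_{d+2}$, each of size at least $c\cdot 2n$, with $\rho$-type transversals. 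The colour pattern of $\rho$ forces $X_1,\dots,X_{d+1}\subseteq P$.

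Finally, take any transversal $(x_1,\dots,x_{d+1})$ and any fixed $y\in X_{d+2}$. The sequence $(x_1,\dots,x_{d+1},y)$ is a copy of $\rho$, so $y$ is interior to $\conv\{x_1,\dots,x_{d+1}\}$. By the choice of the cluster, so is $q$. This gives $c_{DC}=2c$.

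The main obstacle is the bookkeeping with labelled sequences. A subsequence of $P'$ respects the order of $P'$, so the $x_i$ arrive in increasing index order. The containment criterion above is invariant under relabelling of $p_1,\dots,p_{d+1}$, because it only compares signs, so this causes no problem. The remaining check is that the cluster approximation is uniform over all $\binom{n}{d+1}$ simplices; this holds because there are finitely many of them, each with $q$ at positive distance from its boundary hyperplanes.
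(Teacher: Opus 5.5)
Your route is the paper's: replace $q$ by a cluster of $n$ points of a second colour, then apply \cref{thm:main_label} with $k=d+2$. You pigeonhole over labelled types where the paper uses the single unlabelled type $\Delta\cup\{q'\}$, which makes no difference. The argument is complete when $q$ lies on no hyperplane spanned by $d$ points of $P$. However, the step ``if $q$ is not in general position, perturb it slightly'' does not reduce to that case. After replacing $q$ by $\tilde q$, you only learn that every transversal simplex contains $\tilde q$ (and the surviving cluster points) in its interior. A simplex having $q$ on a facet may well contain $\tilde q$ in its interior. Your final inference ``by the choice of the cluster, so is $q$'', and the uniformity remark in your last paragraph, use exactly the genericity you have not secured.

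This is not hypothetical. In the plane, let $q$ be the midpoint of $p,p'\in P$, with points of $P$ on both sides of the line $pp'$. Every point near $q$ off that line is interior to some triangle $pp'p''$. Nothing in the black-box output of \cref{thm:main_label} prevents $p\in X_1$, $p'\in X_2$ and $X_3$ on the side of $\tilde q$. The transversal $(p,p',x_3)$ then has $q$ on its boundary. (The paper's sketch is silent on this case as well.)

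A repair that needs no genericity of $q$ goes as follows.
\begin{enumerate}
\item Let $\varepsilon>0$ be smaller than the distance from $q$ to the boundary of every simplex on $P$ that contains $q$ in its interior.
\item Pick a simplex $\conv(v_1,\dots,v_{d+1})$ of diameter less than $\varepsilon$ with $q$ in its interior.
\item Replace each $v_j$ by a cluster $Q_j$ of $n$ points with its own colour. Make the clusters so small that they stay within distance $\varepsilon$ of $q$, and that every transversal of $(Q_1,\dots,Q_{d+1})$ still spans a simplex containing $q$ in its interior.
\item Every $(d+1)$-subset of $P$ whose simplex contains $q$, together with one point from each $Q_j$, gives a $(2d+2)$-subsequence whose last $d+1$ points are interior to the simplex of the first $d+1$. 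There are positive-density many of these.
\item Apply \cref{thm:main_label} with $k=2d+2$ and $\kappa=d+2$ to a most frequent such type. The colours force $X_{d+1+j}\subseteq Q_j$.
\end{enumerate}
For any transversal $(x_1,\dots,x_{d+1})$ and any $z_j\in X_{d+1+j}$, convexity of the interior gives $q\in\conv(z_1,\dots,z_{d+1})\subseteq\operatorname{int}\conv(x_1,\dots,x_{d+1})$.

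A small slip: an orientation in $\mathbb{R}^d$ involves one point of $Q$ and $d$ points of $P$, not $d+1$.
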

This again follows from \cref{thm:main_label}.
\begin{proof}[Proof sketch from \cref{thm:main_label}]
    Let $Q$ be a set of $n$ points sufficiently close to $q$ such that each of the $\delta \binom{n}{d+1}$ simplices that contains $q$ also contains all points of $Q$.
    Let $\Delta$ be such a simplex and $q' \in Q$.
    Let further, $P' \coloneqq P \cup Q$ and $\col : P' \to [2]$ a 2-coloring of $P'$ with $\col(P) = 1$ and $\col(Q) = 2$.
    Then, applying \cref{thm:main_label} to $P'$ and the (unlabelled) order type $\Delta \cup \{q'\}$ implies the result.
\end{proof}

Finally we mention a relationship of our result with a problem on permutations.
In the case of $d=1$ and $\{P(i):1 \le i \le n\}=[n]$, $P$ is simply a permutation
of $[n]$. In this setting, a copy of a labelled order type $\rho$ is an occurrence
of the permutation pattern $\rho$, that is, a subsequence of $P$ with the same relative order as $\rho$.
Given $I,J \subset [n]$, we say that $I<J$, if for every $i \in I$ and $j \in J$, we have
that $i<j$.
Thus, Theorem~\ref{thm:main_label} applies to permutations as well: if a pattern $\rho$ has positive density
$\delta >0$
in a permutation $P$, then there exist pairwise disjoint sets of indices $I_1<\cdots<I_k$,
each of size at least $c \cdot n$ for some constant $c=c(\delta,k)>0$,
such that for every transversal $(i_1,\dots,i_k)$ of $(I_1,\dots,I_k)$
we have that the subsequence
\[(P(i_1),\dots,P(i_k))\] is
an occurrence of $\rho$.
The problem of finding the maximum possible density of a given pattern $\rho$ in a permutation of $[n]$ is known
as \emph{permutation packing}; see Albert, Atkinson, Handley, Holton and Stromquist~\cite{perm_pack}.

\paragraph{Outline.} This paper is organized as follows. In Sections~\ref{sec:trans},~\ref{sec:sgn_proj} and~\ref{sec:pin}, we present the three
main tools we use for the proof of Theorem~\ref{thm:main_label}. We believe that these tools may be
of independent interest.

In Lemma~\ref{lem:transversals} of Section~\ref{sec:trans}, we show
that if there exist linear sized subsets $(X_1,\dots,X_k)$ of $P$ with $\rho$-type transversals such that
a positive fraction of them are also subsequences of $P$, then we can find subsets $Y_i \subset X_i$ of linear
size such that $(Y_1,\dots,Y_k)$ has $\rho$-type transversals with respect to $P$. In particular the problem
is now reduced to finding these $X_i$.
In Theorem~\ref{thm:proj} of Section \ref{sec:sgn_proj}, we show that we can project $P$ from a point $o$ to a hyperplane below $o$, and recover the order type
of a simplex containing $o$ at the cost of adding the information to every point on whether its preimage is below or above $o$.
In Theorem~\ref{thm:pin} of Section~\ref{sec:pin}, we show that we can find a set $Q$ of $\binom{n}{k}$ points in $\mathbb{R}^d$
such that for a constant fraction of the copies of $\rho$  in $P$, we have that every one of its simplices
contains a point of $Q$ in its interior.
These points enable us to do the projection described in Section~\ref{sec:sgn_proj}.

Finally, in Section~\ref{sec:proof} we prove Theorem~\ref{thm:main_label} via a further strengthening of \cref{thm:main_label} (\cref{lem:synced-sets}).

\section{From $\rho$-type transversals to $\rho$-type subsequences of $P$}\label{sec:trans}

Let $k \le n$ be a positive integer. For a given set $X$, let $X^k$ denote the $k$-fold Cartesian product of $X$ with itself; and let
\[
X^{(k)}\coloneqq\{(x_1,\dots,x_k)\in X^k:\ x_i\neq x_j\text{ for }i\neq j\}.
\]
If $X$ is a set of real numbers, then
let \[X_{<}^{(k)}:=\left \{t \in X^{(k)}: 1 \le t(1) < t(2) < \cdots < t(k) \le n \right \}.\]

Let $f$ be a function with domain equal to $Y$ and $\tau$ be a sequence of $k$ elements in $Y$. We define
\[f[\tau]:=(f(\tau(1)),\dots,f(\tau(k))).\]
Note that we can regard a sequence of $n$ elements as a function with domain equal to $[n]$; thus,
if now $\sigma$ is a sequence of $n$ elements and $\tau$ a sequence of $k$ elements in $[n]$,
then \[\sigma[\tau]=(\sigma(\tau(1)),\dots,\sigma(\tau(k))).\]
In what follows, let $P\in (\mathbb{R}^d)^k$. For every $p \in\{P(i):1 \le i \le n\}$,
we define the \emph{position} of $p$ in $P$ as the index $\pos_P(p)$, such that $P(\pos_P(p))=p$.

\begin{lemma}\label{lem:transversals}
 Let $\rho$ be a $k$-point sequence in $\R^d$ and $\delta >0$. Suppose that there exist disjoint subsets $X_1,\cdots,X_k$ of $P$, such that:
 \begin{itemize}
  \item $(X_1,\dots,X_k)$ has $\rho$-type transversals; and
  \item there exist $\delta \cdot \binom{n}{k}$ transversals to $(X_1,\dots,X_k)$ that are subsequences of $P$.
 \end{itemize}
 Then, there exists a constant $\alpha=\alpha(d,\delta,k)>0$ and subsets
 $Y_1 \subset X_1,\dots,Y_k \subset X_k$ such that:
 \begin{itemize}
  \item[$(a)$] each $Y_i$ has at least $\alpha \cdot n$ points; and

  \item[$(b)$] $(Y_1,\dots,Y_k)$ has $\rho$-type transversals with respect to $P$.
 \end{itemize}
\end{lemma}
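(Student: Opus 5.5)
The plan is to reduce the statement to a purely one-dimensional question about positions. The geometry is already handled: by hypothesis, every transversal of $(X_1,\dots,X_k)$ is a copy of $\rho$. So it suffices to find $Y_i\subset X_i$ of linear size such that every transversal $(y_1,\dots,y_k)$ satisfies $\pos_P(y_1)<\cdots<\pos_P(y_k)$, that is, it is a subsequence of $P$. I will get this by cutting the index set $[n]$ into a constant number of consecutive intervals and applying pigeonhole.

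First I fix an integer $m=m(\delta,k)$, to be chosen later, and partition $[n]$ into $m$ consecutive intervals $I_1<\cdots<I_m$, each of length at most $\lceil n/m\rceil$. Call a transversal \emph{good} if it is a subsequence of $P$; by hypothesis there are at least $\delta\binom{n}{k}\ge \delta n^k/k^k$ good transversals (up to lower-order terms for large $n$; small $n$ is absorbed into $\alpha$). Next I bound the good transversals in which two of the positions $\pos_P(x_i),\pos_P(x_j)$ fall in the same interval. Choose the pair $\{i,j\}$ in at most $\binom{k}{2}$ ways. Choose $\pos_P(x_i)$ in at most $n$ ways and $\pos_P(x_j)$ in at most $\lceil n/m\rceil$ ways. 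The remaining points contribute at most $n^{k-2}$. This gives at most $k^2 n^{k}/m$ such transversals, up to rounding. Taking $m\approx 2k^{k+2}/\delta$ makes this at most half the good transversals, so at least $\delta n^k/(2k^k)$ good transversals have all $k$ positions in distinct intervals.

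For such a transversal, being a subsequence of $P$ forces the intervals containing $\pos_P(x_1),\dots,\pos_P(x_k)$ to be $I_{j_1},\dots,I_{j_k}$ with $j_1<\cdots<j_k$. There are at most $m^k$ such index tuples, so by pigeonhole some fixed tuple $j_1<\cdots<j_k$ receives at least $\delta n^k/(2k^km^k)$ of them. Set $Y_i:=\{x\in X_i:\pos_P(x)\in I_{j_i}\}$. The number of transversals of $(Y_1,\dots,Y_k)$ is $\prod_i|Y_i|$, and each $|Y_i|\le \lceil n/m\rceil$. Hence for each $i$,
\[
|Y_i|\ \ge\ \frac{\delta n^k/(2k^km^k)}{\lceil n/m\rceil^{k-1}}\ \ge\ \alpha\, n
\]
with, for example, $\alpha=\delta/(2^{k}k^k m)$, which depends only on $\delta$ and $k$. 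This establishes $(a)$. For $(b)$, any transversal $(y_1,\dots,y_k)$ of $(Y_1,\dots,Y_k)$ has $\pos_P(y_i)\in I_{j_i}$. Since $I_{j_1}<\cdots<I_{j_k}$, the transversal is a subsequence of $P$, and it is a copy of $\rho$ because $Y_i\subset X_i$.

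The only step that needs care is the collision count, which shows that transversals with two positions in the same interval are negligible once $m$ is a large enough constant. Everything else is pigeonhole. I expect the rounding and small-$n$ bookkeeping to be routine.
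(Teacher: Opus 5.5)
Your proof is correct, and it takes a different route from the paper's.

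\textbf{The paper's route.} The paper builds the sets greedily, one coordinate at a time.
\begin{itemize}
\item $Y_1$ is the set of ``heavy'' points of $X_1$, namely those that start at least $|T_0|/2n$ of the good transversals.
\item $q_1$ is the heavy point of largest position, and $T_1$ is the set of good transversals starting at $q_1$.
\item $Y_2$ is the set of heavy second coordinates within $T_1$, and so on for later coordinates.
\end{itemize}
The ordering comes from the chain $\pos_P(y_{j-1})\le \pos_P(q_{j-1})<\pos_P(y_j)$. This holds because every $y_j\in Y_j$ completes a good transversal passing through $q_{j-1}$.

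\textbf{Your route.} You instead cut the positions into $m=\Theta(k^{k+2}/\delta)$ consecutive blocks. You show that transversals with two positions in the same block are negligible, and then pigeonhole on the increasing block pattern. For $n\ge m$ the rounding works out exactly, since $\lceil n/m\rceil\le 2n/m$. For $n<m$ one has $\alpha n<1$, so the singletons of any one good transversal already suffice.

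\textbf{Comparison.} Both arguments end with the same structure: $Y_1,\dots,Y_k$ occupy disjoint ranges of positions, ordered increasingly.
\begin{itemize}
\item Your version is shorter and treats all coordinates symmetrically. It avoids the nested heaviness bookkeeping and the repeated ``sufficiently large $n$'' caveats.
\item The paper's version gives a better constant. Its $\alpha=\delta/(2^{2(k-1)}k^{k-1})$ is linear in $\delta$. Yours is about $\delta^2/(2^{k+1}k^{2k+2})$, because the number of blocks must grow like $1/\delta$ to make collisions negligible.
\end{itemize}
Neither difference matters for how the lemma is used in the proof of the main theorem.
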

\begin{proof}
We define the $Y_i$ iteratively.
Let $T_0$ be the set of transversals to $(X_1,\dots, X_k)$ that are subsequences of $P$.
We have that
\[|T_0| \ge \delta \cdot \binom{n}{k}.\]
 We say that a point $p \in X_1$ is \emph{heavy} if there are at least $|T_0|/2n$, transversals $\tau \in T_0$,
 such that $\tau(1)=p$. We also say that $\tau \in T_0$ is \emph{heavy} if $\tau(1)$ is heavy.
 Let \[Y_1\coloneqq\{p \in X_1: p \textrm{ is heavy}\}.\]

 By counting for every non-heavy element of $X_1$, we have that the number of non-heavy transversals in $T_0$ is at most
 \[|X_1|\cdot \frac{|T_0|}{2n} \le \frac{|T_0|}{2}.\] Therefore, the number of heavy transversals in $T_0$ is at least \[\frac{\delta}{2} \cdot \binom{n}{k}.\]
 Since every point in $X_1$ is in at most $\binom{n}{k-1}$ transversals in $T_0$, we have that
 \[|Y_1|  \ge \left. \frac{\delta}{2} \cdot \binom{n}{k}\,\middle/\,  \binom{n}{k-1} \right.  > \frac{\delta}{2^2 k} \cdot n,\]
 for sufficiently large $n$.

 Let $q_1$ be the point of $Y_1$ with  maximum value $\pos_P(q_1)$. Let
 \[T_1\coloneqq\left \{\tau \in T_0: \tau(1)=q_1 \right \}.\]
 Since $q_1$ is heavy we have that
 \[|T_1| \ge \frac{1}{2n}|T_0| \ge \frac{\delta}{2n} \binom{n}{k} \ge \frac{\delta}{2^2k} \cdot \binom{n}{k-1},\]
 for sufficiently large $n$.

 We proceed by induction on $i$. Suppose that for $2 \le i < k$, we have defined:
 \begin{itemize}
  \item sets of points, $Y_1 \subset X_1, \cdots, Y_{i-1} \subset X_{i-1}$;

  \item sets of points $q_1 \in Y_1,\dots, q_{i-1} \in Y_{i-1}$; and

  \item sets of transversals to $(X_1,\dots,X_k)$, $T_{i-1} \subset T_{i-2} \subset \cdots \subset T_0$.
 \end{itemize}
 that satisfy the following,
 for every $1 \le j \le i-1$:
 \begin{itemize}
 \item[$(1)$] for every point $p \in Y_j$, there exist at least $|T_{j-1}|/2n$ transversals in $\tau \in T_{j-1}$
 such that \[\tau(j)=p;\]

 \item[$(2)$] \[|Y_j| \ge \frac{\delta}{2^{2j}k^j} \cdot n;\]

 \item[$(3)$] $q_j$ is the point of $Y_j$ with  maximum value $\pos_P(q_j)$;

 \item[$(4)$] \[T_j\coloneqq\{\tau \in T_0:\tau(l)=q_l \textrm{ for every } 1 \le l \le j\};\]

 \item[$(5)$] \[|T_j| \ge \frac{\delta}{2^{2j}k^j}\binom{n}{k-j}.\]
 \end{itemize}
We now define $Y_i, q_i$ and $T_i$.

We say that a point $p \in X_i$ is \emph{heavy} if there are at least $|T_{i-1}|/2n$, transversals $\tau \in T_{i-1}$,
 such that $\tau(i)=p$. We also say that $\tau \in T_{i-1}$ is \emph{heavy} if $\tau(i)$ is heavy.
 Let \[Y_i\coloneqq\{p \in X_i: p \textrm{ is heavy}\}.\]
 By definition $Y_i$ satisfies $(1)$.

 By counting for every non-heavy element of $X_i$, we have that the number of non-heavy transversals in $T_{i-1}$ is at most
 \[|X_i|\cdot \frac{|T_{i-1}|}{2n} \le \frac{|T_{i-1}|}{2}.\] By $(4)$ the number of heavy transversals in $T_{i-1}$ is at least
 \[\frac{1}{2} \cdot \frac{\delta}{2^{2(i-1)}k^{i-1}}\binom{n}{k-i+1}.\]
 Since every point in $X_{i+1}$ is in at most $\binom{n}{k-i}$ transversals in $T_{i-1}$, we have that
 \[|Y_i|  \ge \left. \frac{1}{2} \cdot \frac{c}{2^{2(i-1)} k^{i-1}} \cdot \binom{n}{k-i+1}\,\middle/\,  \binom{n}{k-i} \right.  >
 \frac{1}{2} \cdot \frac{\delta}{2^{2(i-1)} k^{i-1}}\cdot \frac{n}{2(k-i)} >\frac{\delta}{2^{2i} k^i} \cdot n,\]
 for sufficiently large $n$. Thus, $Y_i$ satisfies $(2)$.

 Let $q_i$ be the point of $Y_i$ with  maximum value $\pos_P(q_i)$. Let
 \[T_i\coloneqq\left \{\tau \in T_{i-1}: \tau(i)=q_i \right \}.\]
 By definition $q_i$ and $T_i$ satisfy $(3)$ and $(4)$, respectively.

 Since $q_i$ is heavy, by $(5)$ we have that
 \[|T_i| \ge \frac{1}{2n}|T_{i-1}| \ge \frac{1}{2n} \cdot \frac{\delta}{2^{2(i-1)}k^{i-1}}\binom{n}{k-i+1}
 \ge \frac{\delta}{2^{2i}k^{i}} \binom{n}{k-i},\]
 for sufficiently large $n$. Thus, $T_i$ satisfies $(5)$.

 Suppose we have defined up to $Y_{k-1}$, $q_{k-1}$ and $T_{k-1}$ that satisfy $(1)-(5)$.
 Let \[Y_k\coloneqq\{p \in X_k:\textrm{ there exists } \tau \in T_{k-1} \textrm{ such that } \tau(k)=p\}.\]
 By $(5)$, we have that
 \[|Y_k|=|T_{k-1}|\ge \frac{\delta}{2^{2(k-1)}k^{k-1}} \cdot n\]
Let $q_k$ be the element of $Y_k$ of maximum value $\pos_P(q_k)$ and let
$T_k\coloneqq\{(q_1,\dots,q_k)\}$.

 Let
 \[\alpha \coloneqq  \frac{\delta}{2^{2(k-1)}k^{k-1}}.\]
 By $(2)$, we have that every $Y_i$ satisfies that
 \[|Y_i|\ge \alpha \cdot n. \]
 This proves $(a)$.

 Since $T_k \subset T_{k-1} \cdots \subset T_0$, we have that
 $(q_1,\dots,q_k) \in T_0$. Therefore, $(q_1,q_2,\dots,q_k)$ is a subsequence of $P$.
 Let $(y_1,\dots,y_k)$ be a transversal to $(Y_1,\dots,Y_k)$. Let $1 \le j \le k$. By $(3)$
 we have that \[\pos_P(y_j) \le \pos_P(q_j).\] Suppose that $j >1$. Since $y_j\in Y_j$, there exists
 a transversal $\tau \in T_{j-1}$, such that $\tau(j)=y_j$. Since $\tau \in T_0$, this implies
 that \[\pos_P(q_{j-1})=\pos_P(\tau(j-1)) < \pos_P(\tau(j)) =\pos_P(y_j).\]
 Therefore,
 \[\pos_P(y_1) < \pos_P(y_2) < \cdots < \pos_P(y_k),\]
 and $(y_1,\dots,y_k)$ is a subsequence of $P$.
 This proves $(b)$.

\end{proof}

\section{Signed central projection}\label{sec:sgn_proj}

Let  $o \in \mathbb{R}^d$, $\Pi$ some oriented hyperplane passing through $o$, and $\Pi'$ another hyperplane parallel to $\Pi$ which we identify with $\R^{d-1}$.
We define the \emph{signed central projection from $o$} for points $q \in \mathbb{R}^d\setminus \Pi$ as follows.
Let $\ell_q$ be the straight line through $o$ and $q$.
The signed central projection from $o$ maps $q$ to the point $\pi_o(q)$
in the intersection $\ell_q \cap \Pi$.
Here, we consider $\pi_o(q)$ as a point in $\R^{d-1}$. We  add a \emph{sign} to $\pi_o(q)$, where we set $\sgn(\pi_o(q))\coloneqq +1$ if $q$ is ``above'' $\Pi$ and $\sgn(\pi_o(q))\coloneqq -1$ if $q$ is ``below'' $\Pi$.
Although the projection depends on $\Pi$, we don't include $\Pi$ in our notation of $\pi_o$, as the specific choice 
is not relevant for our purposes.

See \cref{fig:signed-central-projection} for an illustration.

\begin{figure}[h]
    \centering
    \includegraphics[scale=1]{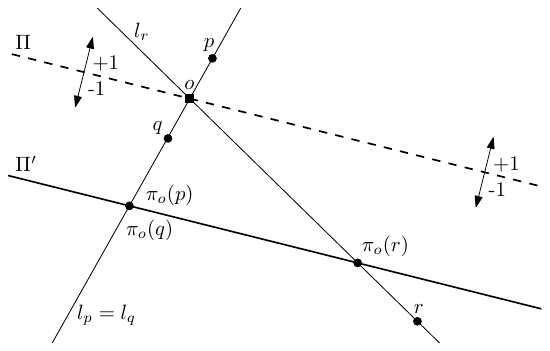}
    \caption{The signed central projection from $o$ with respect to the hyperplanes $\Pi$ and $\Pi'$.
    Because $p,q$ and $o$ are collinear, $p$ and $q$ are sent to the same point $\pi_o(p) = \pi_o(q)$ but with different signs.
    In the remainder of the paper, no such collisions of points in $P$ occur, as $P \cup \{o\}$ will be in general position.}
    \label{fig:signed-central-projection}
\end{figure}

Note that, if $\Pi'$ is the hyperplane given by $x_d=0$, $\Pi$ the hyperplane given by $x_d=1$, and $o$ is the origin, then
\[\pi_o(q)=\left (\frac{q(1)}{q(d)},\dots, \frac{q(d-1)}{q(d)}\right).\]
In what follows we choose $o$ and $\Pi$ so that $P \cup\{o\}$ is in general position, and $\Pi \cap P= \emptyset;$ in this setting $\pi_o$
when restricted to $P$ is injective. We use these assumptions
to derive a formula for the order type of a point set after a signed central projection.

\begin{lemma}\label{lem:proj}
Let $\{o\} \cup P \subseteq \R^d$ be points in general position, $\Pi$ a hyperplane passing through $o$ but disjoint to $P$, $\Pi'$ a hyperplane parallel to $\Pi$, and $\pi_o$ the induced signed central projection.
Then, for any subsequence $P' = (p_1, \ldots, p_d) \in P^{(d)}$ of $P$,
    $$\chi(o,p_1,\ldots, p_d) = \sgn \left (
    (-1)^{d-1}\right ) \cdot
    \left ( \prod_{j=1}^d\sgn ( \pi_o(p_{j}) ) \right ) \cdot \chi \left (\pi_o(P')\right).$$
    In particular, $\pi_o(P)$ is in general position and the labelled order type of $\pi_o(P)$ depends only on the labelled order type of $\{o\}\cup P$ and the signs of the projected points.
\end{lemma}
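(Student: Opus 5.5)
The plan is a direct determinant computation after normalizing coordinates. First, I would apply an orientation-preserving affine change of coordinates (a translation followed by a rotation), which leaves every $\chi$ unchanged. I choose it so that $o$ is the origin and $\Pi=\{x_d=0\}$, with "above $\Pi$" meaning $x_d>0$. I take $\Pi'=\{x_d=1\}$ and identify it with $\R^{d-1}$ through the first $d-1$ coordinates. The projection lands on $\Pi'$, since the intersection $\ell_q\cap\Pi$ in the definition is just $o$; I read the definition as intersecting with $\Pi'$.

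In these coordinates, as noted in the paper, $\pi_o(q)=\bigl(q(1)/q(d),\dots,q(d-1)/q(d)\bigr)$ and $\sgn(\pi_o(q))=\sgn q(d)$. This is well defined because $P\cap\Pi=\emptyset$, so $q(d)\neq 0$ for every $q\in P$. This normalization is the only step needing care. The sign factor $(-1)^{d-1}$ in the statement is tied to this convention: the identification of $\Pi'$ with $\R^{d-1}$ and the orientation of $\Pi$ must be compatible with the ambient orientation. I would fix this convention explicitly, which I take to be the intended meaning of the statement.

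Next I compute $\chi(o,p_1,\dots,p_d)$. The first row of the matrix is $(1,0,\dots,0)$, so expanding along it leaves $\det M$, where $M$ is the $d\times d$ matrix with rows $p_j=(p_j(1),\dots,p_j(d))$. I factor $p_j(d)$ out of row $j$ for each $j$. Row $j$ then becomes $(\pi_o(p_j),1)$, and
\[\det M=\Bigl(\prod_{j=1}^d p_j(d)\Bigr)\det\begin{pmatrix}\pi_o(p_1) & 1\\ \vdots & \vdots\\ \pi_o(p_d) & 1\end{pmatrix}.\]
Moving the last column to the front takes $d-1$ adjacent column transpositions, which contributes $(-1)^{d-1}$. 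The resulting matrix is exactly the one defining $\chi(\pi_o(P'))$ in $\R^{d-1}$. Taking signs, and using $\sgn p_j(d)=\sgn(\pi_o(p_j))$, gives the formula.

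For the general position claim, $\{o\}\cup P$ is in general position, so the left-hand side is nonzero, and hence $\chi(\pi_o(P'))\neq 0$ for every $d$-point subsequence. Thus no $d$ projected points lie on a common hyperplane of $\R^{d-1}$. Suppose instead that $k<d$ projected points lay in a common $(k-2)$-flat. Adding $d-k$ further projected points (available as long as $|P|\ge d$) would give $d$ points in a common $(d-2)$-flat, a contradiction. Injectivity also follows, since two coinciding projected points would make such a determinant vanish. Finally, solving the formula for $\chi(\pi_o(P'))$ expresses it through $\chi(o,p_1,\dots,p_d)$ and the signs, which proves the last assertion.
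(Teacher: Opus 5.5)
Your proposal is correct and follows the paper's proof essentially step by step: you normalize to $o=0$, $\Pi=\{x_d=0\}$, $\Pi'=\{x_d=1\}$, expand along the first row, factor $p_j(d)$ out of each row, and cycle the last column to the front to get the factor $(-1)^{d-1}$. Your additional remarks make precise several points the paper leaves implicit: the projection should land on $\Pi'$, the overall sign depends on an orientation convention for $\Pi$ and for the identification $\Pi'\cong\R^{d-1}$, and general position and injectivity follow from the formula. Also, your factor $\prod_j p_j(d)$ is the correct one, whereas the paper writes its reciprocal, which happens to have the same sign.
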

\begin{proof}
Order types are invariant under rotations, translations and scaling. Thus, we may assume that $o$ is the origin, that $\Pi$ is the hyperplane given by $x_d=0$, and that $\Pi'$ is the hyperplane given by $x_d=1$. Thus, the sign of a projected point is given by the sign of its $d$-th coordinate.

Let $P' = (p_1, \ldots, p_d) \in P^{(d)}$.
We have that
\begin{align*}\chi\left(o,p_1, \ldots, p_d\right)
&=\sgn
\det\!\begin{pmatrix}
1 & 0 & \cdots & 0\\
1 & p_{1}(1) & \cdots & p_{1}(d)\\
\vdots & \vdots & \ddots & \vdots\\
1 & p_{d}(1)  & \cdots & p_{d}(d)\\
\end{pmatrix}\\[1ex]
&=\sgn
\det\!\begin{pmatrix}
 p_{1}(1) & \cdots & p_{1}(d)\\
  \vdots & \ddots & \vdots\\
 p_{d}(1)  & \cdots & p_{d}(d)\\
\end{pmatrix}\\[1ex]
&=\sgn \left (
\frac{1}{\prod_{j} p_{j}(d)} \cdot
\det\!\begin{pmatrix}
\frac{p_{1}(1)}{p_1(d)} &  \cdots & \frac{p_{1}(d-1)}{p_1(d)} & 1\\
\vdots & \ddots & \vdots & \vdots \\
\frac{p_{d}(1)}{p_{d}(d)} &  \cdots & \frac{p_{d}(d-1)}{p_{d}(d)} & 1\\
\end{pmatrix} \right )\\[1ex]
&=\sgn \left (
\frac{(-1)^{d-1}}{\prod_{j} p_{j}(d)} \cdot
\det\!\begin{pmatrix}
1 & \frac{p_{1}(1)}{p_1(d)} &  \cdots & \frac{p_{1}(d-1)}{p_1(d)} \\
\vdots & \vdots & \ddots &  \vdots \\
1 & \frac{p_{d}(1)}{p_{d}(d)} &  \cdots & \frac{p_{d}(d-1)}{p_{d}(d)} \\
\end{pmatrix} \right )\\[1ex]
&=\sgn \left (
(-1)^{d-1}\right ) \cdot
\left ( \prod_{j=1}^d\sgn ( \pi_o(p_{j}) ) \right ) \cdot \chi \left (\pi_o(P')\right),
\end{align*}
as desired.
\end{proof}

%
%

\begin{theorem}\label{thm:proj}
Let $Q:=(q_1,\dots,q_{d+1})$ and $Q':=(q_1',\dots,q_{d+1}')$ be  $(d+1)$ element sequences of points in $\mathbb{R}^d$.
Let $o \in \mathbb{R}^d$ and $\Pi$ a hyperplane, such that: $Q \cup Q' \cup \{o\}$ is in general position;
$Q\cap \Pi=Q'\cap \Pi=\emptyset$; and  $o \in \conv(Q)$.
Suppose that
\begin{itemize}
\item $\pi_o[Q] \simeq \pi_o[Q']$ and
\item  $\sgn[\pi_o[Q]]=\sgn[\pi_o[Q']].$
 \end{itemize}
 Then
 \begin{enumerate}
 \customitem{$(1)$} $o \in \conv(Q') $ and \label{item:proj1}
 \customitem{$(2)$} $Q \simeq Q'.$ \label{item:proj2}
 \end{enumerate}

\end{theorem}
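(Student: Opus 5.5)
The plan is to reduce both conclusions to the orientations $\chi(o,\cdot)$ of the $d$-element subsequences of $Q$ and $Q'$, using \cref{lem:proj}, and then to read off everything from barycentric coordinates of $o$ via Cramer's rule. Since $Q$ and $Q'$ each have only $d+1$ points, the labelled order type $Q\simeq Q'$ amounts to the single equality $\chi(Q)=\chi(Q')$. So it suffices to show $\chi(Q')=\chi(Q)$ together with $o\in\conv(Q')$.

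\textbf{Step 1 (transfer orientations through the projection).} For $1\le i\le d+1$ let $Q_{-i}$ denote the subsequence of $Q$ with $q_i$ deleted, and define $Q'_{-i}$ similarly. By \cref{lem:proj}, applied to $\{o\}\cup Q$ and to $\{o\}\cup Q'$,
\[
\chi(o,Q_{-i})=\sgn\left((-1)^{d-1}\right)\cdot\left(\prod_{j\neq i}\sgn(\pi_o(q_j))\right)\cdot\chi(\pi_o[Q_{-i}]),
\]
and the same formula holds with primes. The hypotheses give $\sgn(\pi_o(q_j))=\sgn(\pi_o(q'_j))$ for every $j$. The hypothesis $\pi_o[Q]\simeq\pi_o[Q']$ gives $\chi(\pi_o[Q_{-i}])=\chi(\pi_o[Q'_{-i}])$, since these are corresponding $d$-subsequences of $(d-1)$-dimensional sequences. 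Hence
\[
\chi(o,Q_{-i})=\chi(o,Q'_{-i})\qquad\text{for all } i .
\]

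\textbf{Step 2 (barycentric coordinates).} Let $D=\det M(Q)$, where $M(Q)$ is the matrix in the definition of $\chi$. Let $D_i$ be the determinant of the same matrix with the row of $q_i$ replaced by the row of $o$. Moving that row to the top costs a factor $(-1)^{i-1}$, so
\[
\sgn D_i=(-1)^{i-1}\chi(o,Q_{-i}).
\]
Cramer's rule gives the barycentric coordinates $\lambda_i=D_i/D$ of $o$ with respect to $Q$. In particular $\sum_i D_i=D$, because $\sum_i\lambda_i=1$; this is also linearity of the determinant in the replaced row together with the fact that the first coordinate of each row is $1$.

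\textbf{Step 3 (conclude).} Because $o\in\conv(Q)$ and $\{o\}\cup Q$ is in general position, $o$ lies in the interior of the simplex, so all $\lambda_i>0$. Hence every $D_i$ has sign $\chi(Q)$. Define $D'$ and $D'_i$ for $Q'$ in the same way. By Step 1, $\sgn D'_i=(-1)^{i-1}\chi(o,Q'_{-i})=\sgn D_i=\chi(Q)$ for every $i$, and these signs are nonzero by general position. The identity $D'=\sum_i D'_i$ is then a sum of nonzero terms all of sign $\chi(Q)$. Therefore $\sgn D'=\chi(Q)$, which is \ref{item:proj2}. Moreover $\lambda'_i=D'_i/D'>0$ for all $i$, so $o\in\conv(Q')$, which is \ref{item:proj1}.

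The only delicate point is the sign bookkeeping in Step 2: keeping the row permutation sign $(-1)^{i-1}$ consistent between $Q$ and $Q'$. Since it is identical for both sequences it cancels, so I expect no real obstacle. The decisive observation is $D'=\sum_i D'_i$: it forces $\sgn D'$ to agree with the common sign of the $D'_i$, which is what rules out $o$ lying outside $\conv(Q')$.
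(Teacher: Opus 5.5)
Your proof is correct, and it differs from the paper's at the decisive step. Your Step~1 is the paper's opening move: transferring $\chi(o,Q_{-i})=\chi(o,Q'_{-i})$ through Lemma~\ref{lem:proj}. Your observation that $(-1)^{i-1}\chi(o,Q_{-i})=\chi(Q)$ when $o$ is interior to $\conv(Q)$ is also in the paper.

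The paper proves (1) by contradiction using a ray. If $o\notin\conv(Q')$, a ray from $o$ through the interior of $\conv(Q')$ enters through the facet spanned by some $Q'_{-i}$, with $o$ and $q'_i$ on opposite sides. It exits through the facet spanned by some $Q'_{-j}$, with $o$ and $q'_j$ on the same side. These two facets give $\chi(Q)=-\chi(Q')$ and $\chi(Q)=\chi(Q')$ respectively. Only afterwards does the paper derive (2), using $o\in\conv(Q')$.

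You instead use Cramer's rule and the identity $D'=\sum_i D'_i$, i.e.\ $\sum_i\lambda'_i=1$. This forces the common sign of the $D'_i$ to equal $\sgn D'$, so (2) and then (1) follow from one computation, with no contradiction.

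Both arguments rest on the same fact. The existence of the exit facet in the paper's argument says exactly that the barycentric coordinates of $o$ with respect to $Q'$ cannot all be negative. Your algebraic version is shorter and yields both conclusions at once. It also avoids the unstated genericity in the ray argument, where the ray must cross facets in their relative interiors rather than through lower-dimensional faces. The paper's version is more geometric and easier to picture.
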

\begin{proof}
Let $1 \le i \le d+1$. We denote by $Q_i$, the tuple $(q_1,\dots,q_{d+1})$ with
$q_i$ omitted. We define $Q_i'$ analogously.
By  \cref{lem:proj}, we have that
$$\chi(o,Q_i)= \sgn \left (
(-1)^{d-1}\right ) \cdot \left ( \prod_{j\neq i}\sgn ( \pi_o(q_{j}) ) \right ) \cdot \chi \left (\pi_o(Q_i)\right)$$
and
$$\chi(o,Q_i')= \sgn \left (
(-1)^{d-1}\right ) \cdot \left ( \prod_{j\neq i}\sgn ( \pi_o(q_{j}') ) \right ) \cdot \chi \left (\pi_o(Q_i')\right).$$
Therefore, by assumption
$$\chi\left (o,Q_i\right)=\chi\left (o,Q_i'\right).$$

Since $o$ is in the interior of $\conv(Q)$, we have
that for every $1 \le i \le d+1$, $o$ and $q_i$ lie on the same side of the halfspace bounded
by the hyperplane spanned by $Q_i$.
Therefore,
\[\chi\left (o,Q_i \right)=\chi\left (q_i,Q_i\right) =(-1)^{i-1}\chi\left( Q \right).\]

Suppose for a contradiction that $o \notin  \conv(Q')$ and let $r$ be an infinite ray
with apex $o$ that intersects the interior of $\conv(Q')$. Note that $r$ intersects
two faces of the simplex spanned by $Q'$. Let $i$ and $j$ be the indices such that
these faces are contained in the hyperplanes $H_i$ and $H_j$ spanned by $Q_i'$ and $Q_j'$, respectively.
Without loss of generality assume that $r$ intersects $H_i$ first, so that $q_i'$ and $o$ lie on opposite sides
of $H_i$, and $o$ and $q_j'$ lie on the same side of $H_j$.

Since $\chi(o,Q_i')=-\chi(q_i',Q_i')$, we have that
\begin{align*}
 (-1)^{i-1}\chi\left (Q \right) & = \chi\left (o,Q_i \right) \\
 &= \chi\left (o,Q_i' \right) \\
 &=-\chi(q_i',Q_i')\\
 &=(-1)^{i}\chi\left (Q' \right);
\end{align*}
which implies that $\chi\left (Q \right) =-\chi\left (Q' \right).$

On the other hand, since $\chi(o,Q_j')=\chi(q_j',Q_j')$, we have that
\begin{align*}
 (-1)^{j-1}\chi\left (Q \right) & = \chi\left (o,Q_j \right) \\
 &= \chi\left (o,Q_j' \right) \\
 &=\chi(q_j',Q_j')\\
 &=(-1)^{j-1}\chi\left (Q' \right);
\end{align*}
which implies that $\chi\left (Q \right) =\chi\left (Q' \right),$
a contradiction. This proves \ref{item:proj1}.

Therefore, for any $1 \le i \le d+1$, we have that
\begin{align*}
 (-1)^{i-1}\chi\left (Q \right) & = \chi\left (o,Q_i \right) \\
 &= \chi\left (o,Q_i' \right) \\
 &=\chi(q_i',Q_i')\\
 &=(-1)^{i-1}\chi\left (Q' \right);
\end{align*}
which implies that $\chi\left (Q \right) =\chi\left (Q'\right)$, proving \ref{item:proj2}.
\end{proof}

\section{Pinning sequences of points}\label{sec:pin}

Alon, Bárány, Füredi and Kleitman proved the following result that allows us to find a point that ``pins'' a large family of simplices with vertices on $P$.
\begin{theorem}[Second selection lemma~\cite{second_selection}]
Let $\mathcal{T}$ be a family of $\alpha \cdot \binom{n}{d+1}$ simplices with vertices on $P$, for some
constant $\delta >0$. Then there exists a point contained in the interior of at least
\[\beta_{SS} \cdot \binom{n}{d+1}\]
simplices of $\mathcal{T}$, for some constant $\beta_{SS} = \beta_{SS}(d,\delta)>0$.
\end{theorem}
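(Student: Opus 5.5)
The proof I would give is the classical one from Matoušek's textbook treatment. It combines three ingredients: a supersaturation theorem for hypergraphs, the colorful Tverberg theorem, and the fractional Helly theorem. Throughout I write $\alpha$ for the density of $\mathcal{T}$; the statement's $\delta$ plays this role, so $\beta_{SS}$ should depend on $(d,\alpha)$. I treat each simplex as a convex set and regard $\mathcal{T}$ as the edge set of a $(d+1)$-uniform hypergraph $H$ on the vertex set $P$. By hypothesis $H$ has at least $\alpha\binom{n}{d+1}$ edges.

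The first step is to set $r:=d+1$ and $t:=(r-1)(d+1)+1$. I will check which $t$ the colorful Tverberg version in use actually requires; in any case $t$ is a constant depending only on $d$. I then apply the Erd\H{o}s--Simonovits supersaturation theorem for complete $(d+1)$-partite hypergraphs $K^{(d+1)}(t,\dots,t)$. It states that a $(d+1)$-uniform hypergraph on $n$ vertices with $\alpha\binom{n}{d+1}$ edges contains at least $c_1(d,\alpha)\, n^{(d+1)t}$ copies of $K^{(d+1)}(t,\dots,t)$, for $n$ sufficiently large. Fix one such copy, with parts $V_1,\dots,V_{d+1}$, and color the points of $V_i$ with color $i$. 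Every rainbow $(d+1)$-subset of these points is an edge of $H$, so its simplex belongs to $\mathcal{T}$. The colorful Tverberg theorem (\v{Z}ivaljevi\'c--Vre\'cica) then gives $r=d+1$ pairwise disjoint rainbow simplices whose convex hulls share a common point. Hence each copy of $K^{(d+1)}(t,\dots,t)$ yields a $(d+1)$-tuple of members of $\mathcal{T}$ with nonempty common intersection.

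The second step is to count these tuples. A fixed $(d+1)$-tuple of disjoint simplices consists of $(d+1)^2$ specified vertices. It can arise from at most $O(n^{(d+1)t-(d+1)^2})$ copies, because the remaining vertices of a copy are free, and the number of ways to place the specified vertices into parts is bounded by a constant. Therefore the number of intersecting $(d+1)$-subfamilies of $\mathcal{T}$ is at least $c_2\, n^{(d+1)^2}$. Since $|\mathcal{T}|\le n^{d+1}$, this is at least $c_3\binom{|\mathcal{T}|}{d+1}$, with $c_3=c_3(d,\alpha)>0$.

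The third step applies the fractional Helly theorem (Katchalski--Liu, with Kalai's bounds). Let $\mathcal{F}$ be a finite family of convex sets in $\mathbb{R}^d$ such that a $\beta$-fraction of its $(d+1)$-subfamilies have nonempty intersection. Then some point lies in at least $\gamma(d,\beta)|\mathcal{F}|$ members of $\mathcal{F}$. Applied to $\mathcal{T}$, this gives a point in $\gamma|\mathcal{T}|\ge\gamma\alpha\binom{n}{d+1}$ simplices, so I set $\beta_{SS}:=\gamma(d,c_3)\,\alpha$.

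I expect the main obstacle to be the technical point of \emph{interior} containment, together with making the multiplicity count honest. For the interior issue I would argue as follows. The point $x$ produced lies in finitely many closed simplices, each of which is full-dimensional by general position. I would move $x$ by a tiny perturbation into the interior of a positive fraction of them. The cleanest route is to pick $x$ generic in the intersection region of the relevant cell of the arrangement of facet hyperplanes. An alternative is to use fractional Helly on the open simplices directly, since Tverberg points of a point set in general position can be chosen in the open intersections after a slight perturbation. Failing both, I would fall back on a degree-counting argument: the intersection of the closed simplices containing $x$ is a polytope that contains $x$, and a point near $x$ inside the open cell of the arrangement adjacent to $x$ lies in the interior of all simplices containing that cell. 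One such adjacent cell captures at least a $1/2^{O(\cdot)}$ fraction of them, and I would settle this loss with a constant depending only on $d$.
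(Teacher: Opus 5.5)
Your first three steps are the standard textbook proof of the \emph{closed} version of this lemma, and they are fine. The paper does not prove the lemma at all; it only cites Alon, B\'ar\'any, F\"uredi and Kleitman. Supersaturation, colorful Tverberg (your $t=d^2+d+1$ is large enough for \v{Z}ivaljevi\'c--Vre\'cica), the multiplicity count and fractional Helly together give a point $x$ lying in at least $\gamma\alpha\binom{n}{d+1}$ closed simplices of $\mathcal{T}$.

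The gap is the upgrade to \emph{interior} containment. The statement requires it, and the paper relies on it when it perturbs the pinning points. None of your three remedies works as written.
\begin{itemize}
\item Your fallback asserts that some cell of the arrangement at $x$ lies in a $2^{-O(d)}$ fraction of the simplices through $x$. For a general family of full-dimensional simplices through a common point this is false. If $x\in P$ and the simplices form a fan around $x$ with pairwise disjoint interiors, each cell at $x$ lies in only one of them. In general, the number of facet hyperplanes through $x$, and hence the number of cells at $x$, is not bounded in terms of $d$.
\item The open-simplex route needs colorful Tverberg to output simplices whose \emph{interiors} meet. The paper's notion of general position does not guarantee this, because facet hyperplanes of different simplices may be concurrent. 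For example, three triangles on nine points, no three collinear, whose edges lie on three concurrent lines can meet in exactly one point. Perturbing $P$ does not help, since it changes the instance.
\item Your first option presupposes that the closed simplices through $x$ share an open region, which is exactly what is in question.
\end{itemize}

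The missing idea is a simple count that makes any perturbation unnecessary: for $P$ in general position, a fixed point $x\in\R^d$ lies on the boundary of only $O(n^d)$ simplices spanned by $P$. To see this, take $S\subseteq P$ with $|S|=d$ and $x\in\operatorname{aff}(S)$. The affine hulls of the $d$ facets of the $(d-1)$-simplex $\conv(S)$ have no common point, so some $(d-1)$-subset $T\subset S$ satisfies $x\notin\operatorname{aff}(T)$. Then $\operatorname{aff}(T\cup\{x\})=\operatorname{aff}(S)$ is a hyperplane. By general position it contains at most $d$ points of $P$, so $S$ is determined by $T$, and there are at most $\binom{n}{d-1}$ such $S$. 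Each such $S$ is a facet of at most $n-d$ simplices, so at most $\binom{n}{d-1}(n-d)=O(n^d)$ simplices spanned by $P$ have $x$ on their boundary.

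Hence the fractional Helly point $x$ itself lies in the interior of at least
$\gamma\alpha\binom{n}{d+1}-O(n^d)\ge\frac{\gamma\alpha}{2}\binom{n}{d+1}$
members of $\mathcal{T}$ once $n\ge n_0(d,\alpha)$. Small $n$ is handled by shrinking $\beta_{SS}$: any interior point of a single simplex of $\mathcal{T}$ then suffices.
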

\qed

Let $\rho \in \left ( \mathbb{R}^d\right )^{(k)}$ and  $I \subset [k]_{<}^{(d+1)}$;
we say that a set of points
\[Q\coloneqq\left \{q_{t} \in \mathbb{R}^d: t \in I  \right \},\]
indexed by  $I$,
\emph{pins} $\rho$, if every point $q_t \in Q$ is contained in the interior of the simplex with vertices
$\rho[t]$.
%

\begin{theorem}\label{thm:pin}
Let $P$ be a sequence of $n$ points, $\delta >0$ a constant, $I \subseteq [k]_{<}^{(d+1)}$, and $\mathcal{T}$ a family of $\delta \cdot \binom{n}{k}$ subsequences of
$k$ points of $P$, all copies of some sequence $\rho$ on $k$ points. Then there exists:
 \begin{itemize}
  \item a subfamily $\mathcal{T}' \subset \mathcal{T}$ with $\beta \binom{n}{k}$ elements, for some constant
  $\beta=\beta(d,\delta,k)$; and
  \item a point set \[Q\coloneqq\left \{q_{t} \in \mathbb{R}^d: t \in  
  I\right \},\]
  that pins every element of $\mathcal{T}'$.
 \end{itemize}
\end{theorem}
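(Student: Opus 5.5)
We prove Theorem~\ref{thm:pin} by induction on $|I|$, applying the second selection lemma once for each $t \in I$. The one-step claim is the following. Given a family $\mathcal{T}_0$ of $\gamma\binom{n}{k}$ copies of $\rho$ and a single $t \in I$, there is a point $q_t$ and a subfamily $\mathcal{T}_1 \subset \mathcal{T}_0$ of size $\gamma'\binom{n}{k}$, with $\gamma'=\gamma'(d,\gamma,k)>0$, such that $q_t$ lies in the interior of the simplex spanned by $\tau[t]$ for every $\tau\in\mathcal{T}_1$. Iterating this over the elements of $I$ gives successive subfamilies $\mathcal{T}\supseteq \mathcal{T}_1\supseteq\cdots$ and points $q_t$. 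Since $|I|\le\binom{k}{d+1}$, the final constant $\beta$ depends only on $d,\delta,k$. Each $q_t$ still pins $\tau[t]$ for every $\tau$ in the final subfamily $\mathcal{T}'$, because $\mathcal{T}'$ is contained in the family used at that step.

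For the one-step claim, consider the map $\tau\mapsto \tau[t]$ from $\mathcal{T}_0$ to $(d+1)$-element subsequences of $P$, i.e.\ simplices with vertices in $P$. Each $(d+1)$-subsequence $\sigma$ extends to at most $\binom{n-d-1}{k-d-1}$ $k$-subsequences. Hence the number of distinct simplices $\tau[t]$ is at least
\[
\gamma\binom{n}{k}\Big/\binom{n-d-1}{k-d-1} \;\ge\; \gamma_1\binom{n}{d+1},
\]
with $\gamma_1=\gamma/\binom{k}{d+1}$. This bound alone is not enough. The second selection lemma would give a point in many of these simplices, but those simplices might carry few of the $\tau$'s, so we need to weight.

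To handle multiplicities, call a simplex $\sigma$ \emph{rich} if it equals $\tau[t]$ for at least $\frac{\gamma}{2}\binom{n-d-1}{k-d-1}$ members $\tau \in \mathcal{T}_0$. The non-rich simplices together account for at most $\binom{n}{d+1}\cdot\frac{\gamma}{2}\binom{n-d-1}{k-d-1}=\frac{\gamma}{2}\binom{k}{d+1}\binom{n}{k}$ members. To make this a genuine half of $|\mathcal{T}_0|$, use the threshold $\frac{\gamma}{2\binom{k}{d+1}}\binom{n-d-1}{k-d-1}$ instead. Then at least $\frac{\gamma}{2}\binom{n}{k}$ members of $\mathcal{T}_0$ map to rich simplices. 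Since each simplex carries at most $\binom{n-d-1}{k-d-1}$ members, the number of rich simplices is at least $\frac{\gamma}{2\binom{k}{d+1}}\binom{n}{d+1}$. Apply the second selection lemma to the family of rich simplices to get a point $q_t$ in the interior of at least $\beta_{SS}\binom{n}{d+1}$ of them. Set $\mathcal{T}_1$ to be all $\tau\in\mathcal{T}_0$ with $\tau[t]$ one of these pinned rich simplices. Then
\[
|\mathcal{T}_1|\ge \beta_{SS}\binom{n}{d+1}\cdot \frac{\gamma}{2\binom{k}{d+1}}\binom{n-d-1}{k-d-1}=\frac{\beta_{SS}\,\gamma}{2}\binom{n}{k},
\]
using $\binom{n}{d+1}\binom{n-d-1}{k-d-1}=\binom{n}{k}\binom{k}{d+1}$.

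The main obstacle is exactly this multiplicity issue, namely that the second selection lemma counts simplices while we need to count $k$-subsequences; the rich/poor dichotomy resolves it. The remaining points are minor. We may perturb $q_t$ slightly, keeping it in the open simplices, so that $Q$ together with $P$ is in general position if later sections need that. Distinct $t$ may produce the same point, which is harmless since $Q$ is indexed by $I$.
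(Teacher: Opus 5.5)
Your proposal is correct and follows the paper's proof essentially step for step. The paper also handles the elements of $I$ one at a time and calls a $(d+1)$-subsequence heavy using exactly your corrected threshold $\frac{\gamma}{2}\binom{k}{d+1}^{-1}\binom{n-d-1}{k-d-1}$. It then applies the second selection lemma to the at least $\frac{\gamma}{2}\binom{k}{d+1}^{-1}\binom{n}{d+1}$ heavy simplices and obtains the same recursion $\delta_{i+1}=\delta_i\beta_i/2$. The perturbation of $q_t$, which you mention as an optional extra, is done in the paper later, in the proof of \cref{lem:synced-sets}.
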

\begin{proof}
Let $t_1,\dots,t_{|I|}$ be the elements of $I$ in lexicographical order.
For every $0 \le i <|I|$, we define $\mathcal{T}_i\subset \mathcal{T}$, a constant $\delta_i >0$ and a point set
$Q_i = \{q_{t_1},\ldots, q_{t_i}\}$ that satisfy the following.
\begin{itemize}
 \item $|\mathcal{T}_i| \ge \delta_i \binom{n}{k}$;
 \item for every $1 \le j \le i$ and every $\tau \in \mathcal{T}_i$ , the point  $q_{t_j} \in Q_j$ is in the interior of the simplex
 with vertices  $\tau[t_j]$.

\end{itemize}
For $i=0$ this is satisfied with $\mathcal{T}_0\coloneqq\mathcal{T}$, $\delta_0\coloneqq\delta$, and $Q_0\coloneqq\emptyset$. Suppose that we have defined the above for
some $0\le i<|I|$.

Let $\tau \in P^{(d+1)}$; we say that $\tau$ is \emph{heavy with respect to $i$}, if there exists at least
\[\frac{\delta_i}{2}\cdot\binom{k}{d+1}^{-1} \binom{n-(d+1)}{k-(d+1)},\]  $\sigma \in
\mathcal{T}_{i}$ such that
\[\sigma[t_{i+1}]=
\tau.\]
We also say that $\sigma \in \mathcal{T}_{i}$ is  \emph{heavy with respect to $i$}, if $\sigma[t_{i+1}]$
is heavy.

By counting for every subsequence of $P$ of $(d+1)$ points, we have that the number of non-heavy elements of $\mathcal{T}_i$ is
at most
\[\binom{n}{d+1} \cdot \frac{\delta_i}{2}\cdot\binom{k}{d+1}^{-1} \binom{n-(d+1)}{k-(d+1)} = \frac{\delta_i}{2} \binom{n}{k}.\]
Let $\mathcal{T}_i' \subset \mathcal{T}_i$ be the set of heavy elements of $\mathcal{T}_i$.
Therefore, $|\mathcal{T}_i'| \geq \frac{\delta_i}{2} \binom{n}{k}$. Since every subsequence $(d+1)$ points of $P$ can be completed to at most
$\binom{n-(d+1)}{k-(d+1)}$ elements of $\mathcal{T}_i'$, we have that the number of heavy subsequences of $(d+1)$ points of $P$
is at least \[\left. \frac{\delta_i}{2}\cdot \binom{n}{k} \,\middle/\,  \binom{n-(d+1)}{k-(d+1)} \right. = \left. \frac{\delta_i}{2}\cdot \binom{n}{d+1} \,\middle/\,  \binom{k}{d+1} \right. =\frac{\delta_i}{2}\binom{k}{d+1}^{-1} \cdot \binom{n}{d+1}  .\]

Since $\frac{\delta_i}{2}\binom{k}{d+1}^{-1}$ is a constant, independent of $n$, by the second selection lemma there exists a point $q_{t_{i+1}}$ and a constant $\beta_i = \beta_{SS}(d, \frac{\delta_i}{2}\binom{k}{d+1}^{-1})>0$, such that
$q_{t_{i+1}}$ is in at least
\[\beta_i \binom{n}{d+1}\]
simplices, whose vertices are  heavy subsequences of $(d+1)$ points of $P$; let $\mathcal{T}_{i+1} \subset \mathcal{T}_i'$ be the elements $\sigma \in \mathcal{T}_i'$ such that
$q_{t_{i+1}}$ is in the interior of the simplex with vertices $\sigma[t_{i+1}].$
Since, the $\sigma[t_{i+1}]$ are heavy, we have that
\[|\mathcal{T}_{i+1}| \ge \beta_i  \binom{n}{d+1} \cdot  \frac{\delta_i}{2}\binom{k}{d+1}^{-1} \binom{n-(d+1)}{k-(d+1)} =\frac{\delta_i\beta_i}{2} \cdot \binom{n}{k}.\]
Let \[\delta_{i+1}\coloneqq \frac{\delta_i\beta_i}{2}  \textrm{ and } Q_{i+1}\coloneqq Q_i \cup \{q_{t_{i+1}}\}.\]

The result now follows by setting $\mathcal{T}'\coloneqq \mathcal{T}_{|I|}$, $Q\coloneqq Q_{|I|}$ and $\beta\coloneqq \delta_{|I|}$.
\end{proof}

\section{Proof of \cref{thm:main_label}}\label{sec:proof}

For convenience, let us restate \cref{thm:main_label}.

\mainlabel*

\begin{proof}
By \cref{lem:transversals}, it is sufficient to show that there are linear-sized disjoint subsets $X_1, \ldots, X_k$ of $P$ with $\rho$-type transversals, of which a constant fraction are subsequences of $P$.

To this end, we want to apply induction, which requires the following lemma, a strengthening of the theorem.
Here, instead of only a single sequence $P$, we are given $r$ sequences of $n$ points $P_1, \ldots, P_r$.
The lemma then asserts the existence of linear-size subsets of $[n]$, say $X_1, \ldots, X_k$, such that each of the $r$ sequences $P_i$ when restricted to the indices in the $X_j$, that is, $(P_i[X_1], \ldots, P_i[X_k])$ has $\rho_i$-type transversals for some given collection of $\rho_i$.
This version also takes colors into account but instead of admitting individual colors for each $P_i$, the index set $[n]$ is colored only once.
The special case $r=1$ is the statement we need to prove.

\begin{lemma}\label{lem:synced-sets}

    Let $n$ and $(d,\delta,k,\kappa,r)$ be parameters that satisfy $n,d,k,\kappa,r \in \N$, $n \geq k \geq d+1$, $\delta \in \R_+$ and let $(\mathcal{P}, \col, \mathcal{I})$ be a 3-tuple, where
    \begin{itemize}
        \item $\mathcal{P} = \{P_1, \ldots, P_r\}$ is a collection of $r$ sequences on $n$ points in $\R^d$, each of which is in general position,
        \item $\col:[n]\to [\kappa]$ is a $\kappa$-coloring of $[n]$ that induces a coloring $\col(P_i(j))=\col(j)$ of the points in the sequences $P_i$, and
        \item $\mathcal{I} \subseteq [n]^k_<$ is a collection of at least $ \delta \binom{n}{k}$ $k$-element subsequences of $[n]$ such that for any two of them $\tau,\tau' \in \mathcal{I}$, and any $i \in [r]$, $P_i[\tau]$ and $P_i[\tau']$ are copies of one another. That is, all $P_i[\tau]$ are copies of a shared sequence of points $\rho_i$.
    \end{itemize}
    Then, there exists a constant $c' = c'(d,\delta,k,\kappa,r)$ and disjoint subsets $X_1, \ldots, X_k$ of $[n]$ with $|X_i| \geq c' \cdot n$, such that for every transversal $\tau$ of $(X_1,\ldots, X_k)$, and every $i \in [r]$, $(P_i(\tau(1), \ldots, P_i(\tau(k))$ is a copy of $\rho_i$.
    Further, a constant fraction of those transversals are subsequences of $[n]$.
\end{lemma}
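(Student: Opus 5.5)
Induction on the dimension $d$, using \cref{thm:pin} to find projection centres and \cref{thm:proj} to transfer order types from a projected configuration back to the original one. Signs of projected points are encoded as extra colours.

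\textbf{Base case $d=1$ (or $d=0$).} Here an order type is just the relative order of real numbers, so a labelled copy of $\rho_i$ is an occurrence of a permutation pattern. The requirement that all $r$ sequences simultaneously realise $\rho_1,\dots,\rho_r$ is equivalent to requiring that the $r$ coordinate orders induce prescribed patterns. I would handle this as follows. Project each $P_i$ onto its real line and treat the vector $(P_1(j),\dots,P_r(j))$ as a point of $\R^r$. For $\tau\in\mathcal I$, the comparisons between coordinates form a fixed pattern. Then apply a positive-fraction argument. Repeated halving works: take the median in coordinate $1$ among the relevant indices, then in coordinate $2$, and so on. This is exactly the same-type lemma in $\R^1$ applied coordinatewise, or equivalently a product of $r$ one-dimensional orders. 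Combined with the heaviness pruning from \cref{lem:transversals}, it yields linear-sized $X_j$ with the required order patterns. Colours are handled by first restricting each $X_j$ to the colour $\col(\tau(j))$, which is fixed over $\mathcal I$.

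\textbf{Inductive step.} Assume the lemma in dimension $d-1$ for all parameters, and let $d\ge2$.
\begin{enumerate}
\item For each $i\in[r]$ and each $t\in[k]^{(d+1)}_<$, apply \cref{thm:pin} successively to all $r$ sequences. Pass to a subfamily $\mathcal I'\subseteq\mathcal I$ of density $\beta$ and points $q^{i}_t$ such that $q^i_t$ lies in the interior of the simplex $P_i[\tau[t]]$ for every $\tau\in\mathcal I'$. A small perturbation puts every $P_i\cup\{q^i_t\}$ in general position.
\item For each pair $(i,t)$, choose a hyperplane $\Pi$ through $q^i_t$ that avoids $P_i$, and form the signed projection $\pi_{q^i_t}[P_i]$, a sequence in $\R^{d-1}$. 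This gives $r\binom{k}{d+1}$ new sequences. Encode all the signs $\sgn\pi_{q^i_t}(P_i(j))$ into a refined colouring of $[n]$ with $\kappa\cdot 2^{r\binom{k}{d+1}}$ colours.
\item Pigeonhole over the constantly many labelled order types and colour patterns of the projected $k$-tuples. This gives $\mathcal I''\subseteq\mathcal I'$ of positive density on which all projected sequences have common types.
\item Apply the induction hypothesis in dimension $d-1$ to the projected family, keeping also the original sequences' colour information. This produces $X_1,\dots,X_k$ such that every transversal $\sigma$ has the same projected order types and signs as the members of $\mathcal I''$, and a constant fraction of transversals are increasing.
\end{enumerate}

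\textbf{Recovering the original order types.} Fix $\sigma$, $i$ and $t$, and compare $P_i[\sigma[t]]$ with $P_i[\tau[t]]$ for some $\tau\in\mathcal I''$. The two tuples have equal projected labelled order types and equal signs, and $q^i_t\in\conv(P_i[\tau[t]])$. By \cref{thm:proj}, $P_i[\sigma[t]]\simeq P_i[\tau[t]]$. Ranging over all $t$ shows that $P_i[\sigma]$ is a copy of $\rho_i$. The colours agree because the colouring was refined.

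\textbf{Main obstacle.} I expect the main obstacle to be that the projected sequences need not be in general position: they include $k$-tuples with $k>d$ in $\R^{d-1}$, and the lemma needs $k\ge d$. This is fine, since $k\ge d+1> (d-1)+1$. More delicate is the base of the recursion together with the requirement that the increasing transversals form a constant fraction. I would secure the latter by intersecting, at each stage, with sets chosen via the heaviness argument of \cref{lem:transversals}, so that ordering information survives through every level of the induction.
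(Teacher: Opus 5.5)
The inductive step is correct, but the base case $d=1$ has a genuine gap.

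\textbf{Inductive step.} This is the paper's argument: pins for every $i$ and every $(d+1)$-set of positions via \cref{thm:pin}, signed projections with the signs folded into the colouring, pigeonholing, the hypothesis in $\R^{d-1}$, and \cref{thm:proj} to lift back. You pigeonhole directly on the projected types rather than on the type of $\{p_{(i,S)}\}\cup P_i[\tau]$ plus signs, which is an inessential variation. The worries in your last paragraph are not real obstacles. General position of the projected sequences follows from \cref{lem:proj} once each $P_i\cup\{q^i_t\}$ is in general position. The requirement that a constant fraction of transversals be increasing is simply inherited from the induction hypothesis, so no further intersecting with heavy sets is needed.

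\textbf{The gap in the base case.} Median halving and the one-dimensional same-type lemma give sets whose transversals all have the same type. Nothing forces that type to be the $\rho_i$ shared by $\mathcal I$, and nothing keeps any member of $\mathcal I$ among the transversals.
\begin{itemize}
\item Wrong type: take $r=1$, $k=2$, $P=(n/2+1,\ldots,n,1,\ldots,n/2)$ and $\mathcal I$ the ascending pairs, which have density about $1/2$. The two halves of $[n]$ already have same-type transversals, and all of them are descending.
\item Missed intervals: a median of the relevant endpoints can lie in none of the intervals between $P_i(\tau(a))$ and $P_i(\tau(b))$. This happens, e.g.\ when these intervals form two far-apart clusters of equal weight.
\item No repair from \cref{lem:transversals}: it assumes $\rho$-type transversals from the start and only controls the order of indices.
\item You also never specify the disjoint sets the halving starts from.
\end{itemize}

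\textbf{The missing idea.} Pin in dimension one too. Apply \cref{thm:pin} with $d=1$ for each $i\in[r]$ and each pair $\{a,b\}\subseteq[k]$. This gives $\tilde{\mathcal I}\subseteq\mathcal I$ of positive density and points $p_{(i,\{a,b\})}$ lying strictly between $P_i(\tau(a))$ and $P_i(\tau(b))$ for every $\tau\in\tilde{\mathcal I}$. Then $X_l:=\{\tau(l):\tau\in\tilde{\mathcal I}\}$ works directly:
\begin{itemize}
\item In every $P_i$ the pin separates $X_a$ from $X_b$ in the direction prescribed by $\rho_i$. This gives disjointness and the correct type for every transversal.
\item The colour is constant on each $X_l$.
\item $\tilde{\mathcal I}$ itself provides $\Omega(n^k)$ increasing transversals and forces $|X_l|\ge|\tilde{\mathcal I}|/n^{k-1}=\Omega(n)$.
\end{itemize}
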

We use induction on $d$. Let us start with the inductive step, for which we assume that the statement is proved for any combination of parameters in $\R^{d-1}$.

\paragraph{Induction step $d-1 \to d$:}
For any parameters $(d, \delta, k,\kappa, r)$, there are $\delta' >0$, and $r' \in \N$ such that
$$c'(d, \delta, k,\kappa, r) \geq c'\left(d-1, \delta', k,\kappa,r'\right).$$

Let $m \coloneqq r \cdot \binom{k}{d+1}$ and identify numbers in $[m]$ with tuples in $[r] \times \binom{[k]}{d+1}$.
We iteratively take linear-size subcollections of $\mathcal{I}$
$$\mathcal{I} = \mathcal{I}_0 \supseteq \mathcal{I}_1 \supseteq \ldots \supseteq \mathcal{I}_r \eqqcolon \tilde{\mathcal{I}}$$ in order to produce points $p_{(i,S)}$ where $(i,S) \in [m]$ with the property that
\begin{equation}\label{eq:main_thm_pin_sets}
    p_{(i,S)} \in \conv(P_i[\tau[S]]) \text{ for every }\tau \in \mathcal{I}_{i}.
\end{equation}
To get from $\mathcal{I}_{i-1}$ to $\mathcal{I}_i$, we apply \cref{thm:pin} to 
$$P = P_i,\text{ } I=[k]^{(d+1)}_< \text{, and } \mathcal{T} = \{P_i[\tau] \mid \tau \in \mathcal{I}_{i-1}\}.$$
This yields a collection of points $\{p_{(i,S)}\}_{S \in \binom{[d]}{k+1}}$, each of which pins the family $\mathcal{T'} \subseteq \mathcal{T}$ when restricted to the simplices with indices in $S$.
We may assume that the $p_{(i,S)}$ are in general position with respect to $P_i$ by slight perturbation.
From $\mathcal{T'}$ we can extract the desired subcollection of $k$-tuples
$$\mathcal{I}_i \coloneqq \{\tau \in \mathcal{I}_{i-1} \mid P_i[\tau] \in \mathcal{T}'\}$$
that fulfills \cref{eq:main_thm_pin_sets}.

Note that we can lower bound the size of the final collection by $|\tilde{\mathcal{I}}| \geq \beta_r|\mathcal{I}|$, where $$\beta_0 \coloneqq \delta\text{ and }\beta_i \coloneqq \beta(\beta_{i-1},n,d).$$
Here, $\beta$ is the function defined in \cref{thm:pin} and so $\beta_r$ is a constant.

Now, let for each $(i,S) \in [m]$
$$P'_{(i,S)} \coloneqq \pi_{p_{(i,S)}}(P_i) \subseteq \R^{d-1}$$
denote the point set $P_i$ after a signed central projection from $p_{(i,S)}$ with respect to some suitable hyperplanes.
For any $p \in P'_{(i,S)}$, let $\sgn_{(i,S)}(p)$ denote the sign given to that point by the projection.
Note that by \cref{lem:proj}, each $P_{(i,S)}'$ is in general position.

We will apply the induction hypothesis on the collection of these point sets $\mathcal{P'} \coloneqq \{P'_1\,\ldots,P'_m\}$, a $2^m \kappa$-coloring $\col' : [n] \to [2^m \kappa]$, and a further refinement of $k$-tuples $\mathcal{I}' \subseteq \tilde{\mathcal{I}}$. 
For the coloring, we identify $[2^m\kappa]$ with $[\kappa] \times \{+,-\}^m$ and define
$$\col'(r) \coloneqq (\col(r),\sgn_1(P_1(r)), \ldots, \sgn_m(P_m(r))).$$
For $\mathcal{I}'$, we iteratively pass to subcollections of $\tilde{\mathcal{I}}$
$$\tilde{\mathcal{I}} = \tilde{\mathcal{I}}_0 \supseteq \tilde{\mathcal{I}}_1 \supseteq \ldots \supseteq \tilde{\mathcal{I}}_m \eqqcolon \mathcal{I}'.$$
Here, $\tilde{\mathcal{I}}_j$ is obtained from $\tilde{\mathcal{I}}_{j-1}$ by choosing those $\tau \in \tilde{\mathcal{I}}_{j-1}$ whose sign sequence
$$(\sgn_j(P'_j(\tau(1))), \ldots, \sgn_j(P'_j(\tau(k)))$$
combined with the order type of the sequence
$$\{p_{j}\} \cup P_i[\tau], \text{ where }j=(i,S) \in [m]$$
is most common.
The size of the final collection is then at least $|\mathcal{I}'| \geq (\frac{1}{2^k\cdot f(d,k+1)})^{m}|\tilde{\mathcal{I}}|$, where $f(d,k+1)$ is the number of order types\footnote{This bound could be substantially improved by use of the fact that all relevant order types differ in at most one point, but we choose not do make this explicit since optimizing bounds is not the focus of this paper.} of size $k+1$ in $\R^d$.

Let $\tau^* \in \mathcal{I}'$ be some arbitrary $k$-tuple in the collection.
In order to apply the induction hypothesis to $(\mathcal{P}',\col',\mathcal{I}')$, we need to argue that for every $(i,S) \in [m]$ and $\tau \in \mathcal{I}'$, $P'_{(i,S)}[\tau]$ is a copy of $\rho_{(i,S)}' \coloneqq P'_{(i,S)}[\tau^*]$:
First, the points in $P'_{(i,S)}[\tau]$ and $\rho_i'$ have the same color with respect to $\col'$ since 
\begin{itemize}
    \item $P_i[\tau]$ and $P_i[\tau^*]$ have the same colors with respect to $\col$, and
    \item the signs of the points after projecting with any $\pi_{j}$ are the same, by choice of $\mathcal{I}'$.
\end{itemize} 
Next, to see that $P'_{(i,S)}[\tau]$ and $\rho_{(i,S)}'$ also have the same order type, recall that \cref{lem:proj} implies that the order type of a point set $P'_{(i,S)}[\tau]$ after a signed central projection only depends on the signs of the projected points and the order type of $\{p_{(i,S)}\} \cup P_{i}[\tau]$.
Since these properties are the same for any $\tau \in \mathcal{I}'$ by the choice of $\tilde{\mathcal{I}}_i$, we indeed have $P'_{(i,S)}[\tau] \simeq \rho_{(i,S)}'$.

Applying the induction hypothesis to $(\mathcal{P}',\col',\mathcal{I}')$ gives disjoint sets $X'_1,\ldots,X'_k \subseteq [n]$ of size $c'(d-1,\beta_r\cdot\frac{1}{2^kf(d,k+1)},k,\kappa2^m,m)\cdot n$ with the property that for each $j=(i,S) \in [m]$ $$(P'_{(i,S)}[X_1'], \ldots, P'_{(i,S)}[X_k'])\text{ has } \rho_{(i,S)}'\text{-transversals.}$$

Note that a constant fraction of the transversals to $(X_1', \ldots, X'_k)$ are subsequences of $[n]$. 
We conclude the induction step by arguing that $(P_{i}[X_1'], \ldots, P_{i}[X'_k])$ has $P_i[\tau^*]$-transversals for any $i\in[r]$.
To this end, let $\tau$ be some transversal to $(X_1', \ldots, X'_k)$.
First, the colors of the index sets agree $\col(\tau^*) = \col(\tau)$ by the induction hypothesis and since $\col'$ refines $\col$.
For the order types, it is sufficient by definition to show 
$$P_i[\tau[S]] \simeq P_i[\tau^*[S]]$$ 
for each choice of $d+1$ indices $S \in \binom{[k]}{d+1}$. 
By construction of $\tilde{\mathcal{I}}$, 
$$p_{(i,S)}\in \conv(P_i[\tau^*[S]])$$ 
and by induction hypothesis 
$$\pi_{p_{(i,S)}}(P_i[\tau[S]]) = P'_{(i,S)}[\tau[S]] \simeq P'_{(i,S)}[\tau^*[S]] = \pi_{p_{(i,S)}}(P_i[\tau^*[S]]).$$
Thus, \cref{thm:proj} indeed implies that 
$P_i[\tau[S]] \simeq P_i[\tau^*[S]],$
as desired.
\qed

We are left with proving the base case of the induction.
\paragraph{Case $d=1$:} For any list of parameters $(d=1,\delta, k, \kappa, r)$,
$c'(d, \delta, k,\kappa, r) > 0.$
As in the induction step, we let $m \coloneqq r \cdot \binom{k}{2}$, then identify $[m]$ with tuples in $[r] \times \binom{[k]}{2}$, and pass to a subcollection
$$\tilde{\mathcal{I}} \subseteq \mathcal{I} \text{ with } |\tilde{\mathcal{I}}| \geq \beta_r |\mathcal{I}| \text{ for some } \beta_r > 0$$
by repeated application of \cref{thm:pin}. 
Thus, $\tilde{\mathcal{I}}$ satisfies \cref{eq:main_thm_pin_sets} with respect to a collection of pinning points $\{p_{(i,\{a,b\})}\}_{(i,\{a,b\})\in[m]}$.
Let $\tau^* \in \tilde{\mathcal{I}}$ be some fixed $k$-tuple and for each $i \in [r]$, let $\pi_i$ be a permutation of $[k]$ such that $\pi_i$ sorts the points in $P_i[\tau^*]$ by index, that is,
\begin{equation}\label{eq:main_proof_induction_base}
    P_i[\tau^*](\pi_i(1)) < p_{(i,\{\pi_i(1),\pi_i(2)\})} < P_i[\tau^*](\pi_i(2)) < \ldots < p_{(i,\{\pi_i(k-1),\pi_i(k)\})} < P_i[\tau^*](\pi_i(k)).
\end{equation}
Note that the permutations are independent of the choice of $\tau^*$ since the order that the pinning points separate the points in $P_i[\tau^*]$ is always the same.
Now, the desired index sets that induce $\rho_i$-type transversals of the $P_i$ are simply 
$$X_l \coloneqq \{s \in [n] \mid \tau(l)=s \text{ for some }\tau \in \tilde{\mathcal{I}}\}.$$
The $X_l$ are disjoint since $P_i(X_l)$ and $P_i(X_{l'})$ are separated by $p_{(i,\{l,l'\})}$.
Indeed, by the choice of the $X_i$, each transversal $\tau$ of $(X_1, \ldots, X_k)$ fulfills 
\begin{itemize}
    \item \cref{eq:main_proof_induction_base} for each $i \in [r]$, and
    \item $\col(\tau(j)) = \col(\tau^*(j))$ for each $j \in [k]$,
\end{itemize}
and so 
$$P_i[\tau] \simeq P_i[\tau^*] \simeq \rho_i \text{ for every transversal } \tau \text{ of }(X_1, \ldots, X_k).$$
Further, each $\tau \in \tilde{\mathcal{I}}$ is a subsequence of $[n]$ and a transversal of $(X_1, \ldots, X_k)$. 
Thus, we have for the size of each $X_l$
$$|X_l| \geq \frac{|\tilde{\mathcal{I}}|}{\prod_{j \neq l}|X_j|} \geq \frac{\delta \beta_r \binom{n}{k}}{n^{k-1}} \geq \frac{\delta \beta_r}{k^{k-1}}\cdot n,$$
a constant fraction of $n$.
Finally, since each $\tau \in \tilde{\mathcal{I}}$ is a subsequence of $[n]$, so are a constant fraction of all transversals.
This finishes the proof of the case $d=1$, thus the proof of \cref{lem:synced-sets} and, by using the special case $r=1$, the proof of our main theorem.
\end{proof}

We conclude the paper with an open problem.
\begin{problem}
Let $\rho$ be an (uncolored and unlabelled) order type on $k$ points in $\mathbb{R}^d$, and let $P$ be a set of $n$ points in $\mathbb{R}^d$, such
that $P$ has the maximum number possible of copies of $\rho$. Does there exist disjoint subsets $(X_1,\dots,X_k)$ each with $\approx \frac{n}{k}$ points,
such that $(X_1,\dots,X_k)$ has $\rho$-type transversals?
\end{problem}

\section{Acknowledgments}
This work was initiated at the \emph{15th Crossing Numbers Workshop 2025, Guanajuato, M\'exico}. We thank the participants for many fruitful discussions. In particular, we thank Birgit Vogtenhuber for suggesting that every rectilinear drawing of $K_n$ with few crossings should contain a blow-up of a non-convex $4$-tuple and to Stefan Felsner for pointing out the connection to permutation packing.

{
\small \bibliographystyle{alpha}
\bibliography{blow}
}
\end{document}